\numberwithin{equation}{section}
\theoremstyle{plain}
\newtheorem{theorem}{Theorem}[section]
\newtheorem{proposition}{Proposition}[section]
\newtheorem{corollary}{Corollary}[section]
\newtheorem{lemma}{Lemma}[section]
\theoremstyle{definition}
\newtheorem{definition}{Definition}[section]
\theoremstyle{remark}
\newtheorem{rem}{Remark}[section]
\begin{document}

\title{Nefness of the direct images of pluricanonical bundles}

\author{Jingcao Wu}
\email{jingcaowu13@fudan.edu.cn}
\address{School of Mathematical Sciences, Fudan University, Shanghai 200433, People's Republic of China}

\classification{32J25 (primary), 32L05 (secondary).}
\keywords{direct image, asymptotic multiplier ideal sheaf, positivity.}
\thanks{This research was supported by China Postdoctoral Science Foundation, grant 2019M661328.}

\begin{abstract}
Given a fibration $f$ between two projective manifolds $X$ and $Y$, we provide a sufficient condition such that the direct images $f_{\ast}(K_{X/Y}\otimes L\otimes\mathscr{I}(f,\|L\|))$ is nef, where $L$ is a holomorphic line bundle with non-negative relative Iitaka dimension and $\mathscr{I}(f,\|L\|)$ is the relative asymptotic multiplier ideal sheaf. 
\end{abstract}

\maketitle

\section{Introduction}
\label{sec:introduction}

Assume that $f:X\rightarrow Y$ is a fibration i.e. a surjective morphism with connected fibres between two projective manifolds $X$ and $Y$, and $K^{m}_{X/Y}$ is the $m$-th tensor power of the relative pluricanonical bundle on $X$. The positivity of the associated direct image $f_{\ast}(K^{m}_{X/Y})$ is of significant importance for understanding the geometry of this fibration. Fruitful results have been published on this subject, for example \cite{Ber08,Ber09,Hor10,Kaw81,Kaw82,Ko86a,Ko86b,Ko87,Vie82b,Vie83}. It turns out that the positivity of $f_{\ast}(K^{m}_{X/Y})$ is deeply influenced by $K_{X/Y}$ when $m>1$.

In this paper, we focus upon the situation that $K_{X/Y}$ has non-negative relative Iitaka dimension $\kappa(K_{X/Y},f)$ (see Sect.\ref{sec:iitaka}); our main theorem is as follows:

\begin{theorem}\label{t11}
Let $f:X\rightarrow Y$ be a smooth fibration between projective manifolds $X$ and $Y$. Fix $p\gg0$ and divisible enough that computes $\mathscr{I}(f,\|K_{X/Y}\|)$ (see Sect.\ref{sec:asymptotic}). For any non-negative integer $k$, if $\mathscr{I}(kp\|K_{X/Y}\|)=\mathcal{O}_{X}$, 
\[
f_{\ast}(K^{m}_{X/Y})
\]
is nef for any non-negative integer $m\leqslant k+1$.
\end{theorem}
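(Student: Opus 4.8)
The plan is to deduce the statement from the positivity principle announced in the abstract --- that for $f$ as in the theorem and a line bundle $L$ on $X$ with $\kappa(L,f)\geqslant 0$ the sheaf $f_{\ast}(K_{X/Y}\otimes L\otimes\mathscr{I}(f,\|L\|))$ is nef --- thereby reducing everything to the triviality of a single relative asymptotic multiplier ideal. We may assume $\kappa(K_{X/Y},f)\geqslant 0$: otherwise the general fibre has Kodaira dimension $-\infty$, so $f_{\ast}K^{m}_{X/Y}$, being torsion-free of generic rank $0$, vanishes for all $m\geqslant 1$, while $f_{\ast}\mathcal{O}_{X}=\mathcal{O}_{Y}$ is nef, and the theorem is trivial. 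Granting $\kappa(K_{X/Y},f)\geqslant 0$, the value $m=0$ is again $f_{\ast}\mathcal{O}_{X}=\mathcal{O}_{Y}$, and $m=1$ follows from the principle applied to $L=\mathcal{O}_{X}$ (here $\mathscr{I}(f,\|\mathcal{O}_{X}\|)=\mathcal{O}_{X}$ and $\kappa(\mathcal{O}_{X},f)=0$), giving that $f_{\ast}K_{X/Y}$ is nef. So fix $2\leqslant m\leqslant k+1$ and write $K^{m}_{X/Y}=K_{X/Y}\otimes L$ with $L:=K^{m-1}_{X/Y}$; then $\kappa(L,f)=\kappa(K_{X/Y},f)\geqslant 0$, so the principle gives that $f_{\ast}(K_{X/Y}\otimes L\otimes\mathscr{I}(f,\|L\|))$ is nef. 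Since $K_{X/Y}\otimes L\otimes\mathscr{I}(f,\|L\|)$ is a subsheaf of $K_{X/Y}\otimes L=K^{m}_{X/Y}$ and $f_{\ast}$ is left exact, this nef sheaf sits inside $f_{\ast}K^{m}_{X/Y}$, and the two coincide --- so that $f_{\ast}K^{m}_{X/Y}$ itself is nef --- exactly when $\mathscr{I}(f,\|L\|)=\mathscr{I}(f,\|(m-1)K_{X/Y}\|)=\mathcal{O}_{X}$. The theorem thus reduces to this equality.

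To prove it I would pass through the absolute asymptotic multiplier ideal and then transfer to the relative one. Homogeneity gives $\mathscr{I}(\|(m-1)K_{X/Y}\|)=\mathscr{I}\bigl((m-1)\cdot\|K_{X/Y}\|\bigr)$, and since $\mathscr{I}(c\cdot\|K_{X/Y}\|)$ only decreases as the coefficient $c$ increases, while $m-1\leqslant k\leqslant kp$ (recall $p\geqslant 1$), we obtain
\[
\mathcal{O}_{X}\ \supseteq\ \mathscr{I}\bigl((m-1)\cdot\|K_{X/Y}\|\bigr)\ \supseteq\ \mathscr{I}\bigl(kp\cdot\|K_{X/Y}\|\bigr)=\mathscr{I}(kp\|K_{X/Y}\|)=\mathcal{O}_{X},
\]
hence $\mathscr{I}(\|(m-1)K_{X/Y}\|)=\mathcal{O}_{X}$. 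To pass to the relative version, observe that the relative base ideal $\mathfrak{b}\bigl(f,|qK_{X/Y}|\bigr)$ --- the image of $f^{\ast}f_{\ast}\mathcal{O}_{X}(qK_{X/Y})\otimes\mathcal{O}_{X}(-qK_{X/Y})\to\mathcal{O}_{X}$ --- contains the ordinary base ideal $\mathfrak{b}\bigl(|qK_{X/Y}|\bigr)$, because the global evaluation map factors through the relative one; therefore $\mathscr{I}(f,\|L\|)\supseteq\mathscr{I}(\|L\|)$. Combining, $\mathcal{O}_{X}\supseteq\mathscr{I}(f,\|(m-1)K_{X/Y}\|)\supseteq\mathscr{I}(\|(m-1)K_{X/Y}\|)=\mathcal{O}_{X}$, which is what was required. (The hypothesis that $p$ \emph{computes} $\mathscr{I}(f,\|K_{X/Y}\|)$ enters only to ensure that the asymptotic ideals above are already realized at the finite level $|pK_{X/Y}|$, matching the normalization in the statement and in the metric constructed below.)

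The genuinely hard part is not this reduction but the positivity principle it rests on, whose proof I would organize in three steps. First, use $\kappa(L,f)\geqslant 0$ to endow $L$ with a singular Hermitian metric $h$ of semipositive curvature, built from the relative linear system $|pL|$, whose fibrewise multiplier ideal is exactly $\mathscr{I}(f,\|L\|)$. Second, feed $K_{X/Y}\otimes(L,h)$ into an Ohsawa--Takegoshi type extension theorem, combined with a Berndtsson--P\u{a}un plurisubharmonic-variation argument, so that the $L^{2}$-metric on $f_{\ast}(K_{X/Y}\otimes L\otimes\mathscr{I}(f,\|L\|))$ acquires Griffiths-semipositive (possibly singular) curvature. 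Third --- and this is the crux --- upgrade this semipositivity to nefness: a positively curved vector bundle need not be nef, so one must restrict to an arbitrary curve $C\subset Y$, base-change, and pass to a limit over twists by an ample line bundle, in order to conclude that the restriction to $C$ carries no quotient line bundle of negative degree. I expect step three to be where the smoothness of $f$ and the fibrewise meaning of $\mathscr{I}(f,\|L\|)$ are indispensable, and to be the most technically demanding part of the argument.
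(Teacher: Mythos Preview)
Your reduction has the right shape---set $L=K^{m-1}_{X/Y}$, show $\mathscr{I}(f,\|L\|)=\mathcal{O}_{X}$ via $\mathscr{I}(\|L\|)\subseteq\mathscr{I}(f,\|L\|)$ and monotonicity in the coefficient, and then identify $f_{\ast}(K_{X/Y}\otimes L\otimes\mathscr{I}(f,\|L\|))$ with $f_{\ast}K^{m}_{X/Y}$. That part matches the paper. The gap is in the ``positivity principle'' you invoke: the paper does \emph{not} prove that $f_{\ast}(K_{X/Y}\otimes L\otimes\mathscr{I}(f,\|L\|))$ is nef for every $L$ with $\kappa(L,f)\geqslant0$. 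The abstract promises only a \emph{sufficient condition}, and the actual result (Theorem~\ref{t12}) carries two extra hypotheses: (1) $-K_{X/Y}\otimes L^{p}$ admits a metric $\tau$ with $i\Theta_{\tau}\geqslant0$ and $\mathscr{I}(\tau)=\mathcal{O}_{X}$; and (2) the metric associated to $\mathscr{I}(f,\|L\|)$ is less singular than that associated to $\mathscr{I}(f,\|K_{X/Y}\otimes L\|)$. You never verify these, and both are essential to the paper's machinery: (1) feeds the injectivity theorem (Theorem~\ref{t13}) that underlies the vanishing, and (2) drives Lemma~\ref{l42} in the fibre-product step. For $L=K^{m-1}_{X/Y}$ the paper checks (2) trivially as $(m-1)\varphi\preceq m\varphi$, and checks (1) by noting $-K_{X/Y}\otimes L^{p}=K^{p(m-1)-1}_{X/Y}$ and taking $\tau=(p(m-1)-1)\psi$ for $\psi$ the metric with $\mathscr{I}(\psi)=\mathscr{I}(\|K_{X/Y}\|)$; since $p(m-1)-1\leqslant pk$, the hypothesis $\mathscr{I}(kp\|K_{X/Y}\|)=\mathcal{O}_{X}$ gives $\mathscr{I}(\tau)=\mathcal{O}_{X}$. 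So the assumption on $kp$ is doing real work in condition~(1), not merely trivializing $\mathscr{I}(f,\|L\|)$ as you use it.

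Your final paragraph sketches a route to an unconditional principle via Ohsawa--Takegoshi extension and Berndtsson--P\u{a}un variation, but this is a different programme from the paper's and, as you yourself note, the upgrade from a Griffiths-semipositive singular metric to genuine nefness is a known sticking point; it is not clear your Step~3 goes through without further input. The paper takes the Viehweg--Koll\'{a}r road instead: an injectivity theorem (needing condition~(1)) $\Rightarrow$ a Koll\'{a}r-type vanishing $\Rightarrow$ Castelnuovo--Mumford regularity $\Rightarrow$ global generation of $\mathcal{E}^{\otimes m}\otimes H$ via the $m$-fold fibre product (needing condition~(2) through Lemma~\ref{l42}) $\Rightarrow$ nefness of $\mathcal{O}_{\mathcal{E}}(1)$. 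If you want to salvage your write-up within the paper's framework, keep your second paragraph, but replace the appeal to an unconditional principle by an explicit verification of hypotheses (1) and (2) of Theorem~\ref{t12} for $L=K^{m-1}_{X/Y}$.
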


Here, and throughout the rest of this paper, $\mathscr{I}(\|K_{X/Y}\|)$ (resp. $\mathscr{I}(f,\|K_{X/Y}\|)$) refers to the (resp. relative) asymptotic multiplier ideal sheaf (see Sect.\ref{sec:asymptotic}). Instead of Theorem \ref{t11}, we would like to arrange all the things for the following refined version that is formulated for an arbitrary line bundle $L$ on $X$. Theorem \ref{t11} is then a quick consequence of it.

\begin{theorem}\label{t12}
Let $f:X\rightarrow Y$ be a smooth fibration between projective manifolds $X$ and $Y$. Let $L$ be a holomorphic line bundle on $X$ with $\kappa(L,f)\geqslant0$ and $\kappa(K_{X/Y}\otimes L,f)\geqslant0$. Fix $p\gg0$ and divisible enough that computes $\mathscr{I}(f,\|L\|)$. Assume that the following conditions hold:
\begin{enumerate}
\item[(1)] there exists a (singular) metric $\tau$ on $-K_{X/Y}\otimes L^{p}$ such that $i\Theta_{-K_{X/Y}\otimes L^{p},\tau}\geqslant0$ and $\mathscr{I}(\tau)=\mathcal{O}_{X}$; and
\item[(2)] let $\varphi$ and $\psi$ be the metrics on $L$ and $K_{X/Y}\otimes L$ associated to (see Sect.\ref{sec:asymptotic})
\[
\mathscr{I}(f,\|L\|)\textrm{ and }\mathscr{I}(f,\|K_{X/Y}\otimes L\|)
\] 
respectively, then $\varphi$ is less singular than $\psi$ (see \cite{Dem12}), i.e.
$\varphi\preceq\psi$.
\end{enumerate}
Then
\[
\mathcal{E}:=f_{\ast}(K_{X/Y}\otimes L\otimes\mathscr{I}(f,\|L\|))
\]
is nef.
\end{theorem}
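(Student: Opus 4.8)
The plan is to realise $\mathcal E$ as a direct image equipped with a canonical singular Hermitian $L^{2}$-metric, to prove that this metric is semi-positively curved by Berndtsson's positivity theorem for direct images \cite{Ber09}, and then to convert the curvature positivity into the degree inequalities defining nefness, using hypotheses (1)--(2) to keep the singularities of the metric under control. First I reduce to the case of a curve: since $\mathcal E$ is torsion-free, it is nef if and only if, for every smooth projective curve $C$ and every morphism $\nu\colon C\to Y$, the locally free quotient $\nu^{\ast}\mathcal E/(\mathrm{torsion})$ is a nef vector bundle on $C$ — equivalently, has no quotient of negative degree. Fix such a $\nu$, which we may take finite onto its image (otherwise $\nu^{\ast}\mathcal E$ is trivial and there is nothing to prove), and form the fibre product $g\colon X_{C}:=X\times_{Y}C\to C$, which is again a smooth fibration; write $\pi\colon X_{C}\to X$ and $L_{C}:=\pi^{\ast}L$. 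Base change identifies $\nu^{\ast}\mathcal E$, up to the torsion just mentioned, with $g_{\ast}(K_{X_{C}/C}\otimes L_{C}\otimes\mathscr I(g,\|L_{C}\|))$; moreover $\pi^{\ast}\tau$ is a metric on $-K_{X_{C}/C}\otimes L_{C}^{p}$ with $i\Theta\geq0$ and trivial multiplier ideal, the relative Iitaka dimensions of $L_{C}$ and of $K_{X_{C}/C}\otimes L_{C}$ stay $\geq0$, and the weights attached to the relative asymptotic multiplier ideals still satisfy $\varphi_{C}\preceq\psi_{C}$. Hence it suffices to prove the theorem when $Y=C$ is a curve, and from now on I keep the notation $f,L,\varphi,\psi,\tau$ for the pulled-back data.

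\emph{The $L^{2}$-metric and its positivity.} The weight $\varphi$ on $L$ attached to $\mathscr I(f,\|L\|)$ comes from the relative linear system $|pL|$: locally $\varphi=\tfrac1p\log\sum_{i}|\sigma_{i}|^{2}$, where the $\sigma_{i}$ generate the image of $f^{\ast}f_{\ast}\mathcal O_{X}(pL)\to\mathcal O_{X}(pL)$, so $i\Theta_{L,\varphi}\geq0$. For $y\in Y$ and $u\in\mathcal E_{y}=H^{0}\bigl(X_{y},(K_{X_{y}}\otimes L)\otimes\mathscr I(f,\|L\|)|_{X_{y}}\bigr)$ put
\[
\|u\|_{h,y}^{2}=\int_{X_{y}}c_{n}\,u\wedge\bar u\,e^{-\varphi},\qquad n=\dim X-\dim Y,
\]
which is finite precisely because $u$ lies in $\mathscr I(\varphi)|_{X_{y}}=\mathscr I(f,\|L\|)|_{X_{y}}$. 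By Berndtsson's theorem on the positivity of direct images — in the singular, multiplier-ideal-twisted form due to Berndtsson and P\u{a}un and to P\u{a}un and Takayama, which applies since $i\Theta_{L,\varphi}\geq0$ — the pair $(\mathcal E,h)$ carries Nakano, in particular Griffiths, semi-positive curvature.

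\emph{From positivity to nefness.} Let $\mathcal E\twoheadrightarrow Q$ be a quotient line bundle on the curve $Y$. The quotient metric $h_{Q}$ inherits a semi-positive curvature current from $h$, so $\deg_{Y}Q=\int_{Y}c_{1}(Q,h_{Q})\geq0$ as soon as $h_{Q}$ is a genuine singular metric — neither identically $+\infty$ at a point nor carrying a non-trivial multiplier ideal. Guaranteeing this is exactly what hypotheses (1)--(2) are for. Condition (2), $\varphi\preceq\psi$, says $\varphi$ is the least singular of the natural competing weights; together with the corresponding inclusion $\mathscr I(f,\|K_{X/Y}\otimes L\|)\subseteq\mathscr I(f,\|L\|)$ it keeps the fibrewise $L^{2}$-norms from degenerating, so no quotient metric blows up. Condition (1) — a metric $\tau$ on $-K_{X/Y}\otimes L^{p}$ with $i\Theta\geq0$ and $\mathscr I(\tau)=\mathcal O_{X}$ — feeds an Ohsawa--Takegoshi extension with a weight built from $\tfrac1p\tau$, $\varphi$ and $\psi$: it extends any fibrewise section of $(K_{X_{y}}\otimes L)|_{X_{y}}$ in the relevant multiplier ideal to a neighbourhood of $y$ in $X$ with a uniform $L^{2}$-bound, which produces, near every point of $Y$, enough local sections of $\mathcal E$ to force $\mathscr I(h)=\mathcal O_{Y}$, hence $\mathscr I(h_{Q})=\mathcal O_{Y}$. (Here $\kappa(K_{X/Y}\otimes L,f)\geq0$ is used so that $\psi$ is defined, and $p$ ``divisible enough'' so that $\mathscr I(f,\|L\|)$ and $\mathscr I(f,\|K_{X/Y}\otimes L\|)$ are genuinely attained at level $p$.) Consequently $\deg_{Y}Q\geq0$ for every quotient, so $\mathcal E$ is nef on the curve $Y$, and by the reduction step $\mathcal E$ is nef in general.

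\emph{The main obstacle.} The crux is this last step: in general a singular, semi-positively curved metric on a direct image yields only Viehweg weak positivity, and the singularities of the fibrewise $L^{2}$-metric along the relative base locus of $|pL|$ can genuinely create quotients of negative degree. Making rigorous how $\mathscr I(\tau)=\mathcal O_{X}$ (an extension-theorem input) and $\varphi\preceq\psi$ (a comparison of the two asymptotic-multiplier metrics) together annihilate these singularities — in particular, controlling the behaviour of $h$ over the points of $C$ lying under the relative base locus — is the technical heart of the argument. A subsidiary but non-trivial point, used in the reduction to curves, is the compatibility of the relative asymptotic multiplier ideal $\mathscr I(f,\|L\|)$ with the base change $\pi\colon X_{C}\to X$.
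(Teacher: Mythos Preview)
Your route is genuinely different from the paper's and, as written, has a real gap at its core.

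\textbf{What the paper does.} The paper follows Viehweg's fibre-product strategy, not the Berndtsson--P\u{a}un $L^{2}$-metric route. Hypothesis~(1) is fed into an Ohsawa--Takegoshi extension (Theorem~\ref{t32}) to produce a \emph{global} metric $\chi$ on $L^{p}\otimes f^{\ast}A$ with $\mathscr I(\tfrac1p\chi)=\mathscr I(f,\|L\|)$; this yields an injectivity theorem (Theorem~\ref{t13}), hence a Koll\'ar-type vanishing theorem (Theorem~\ref{t14}), hence global generation of $R^{q}f_{\ast}(\,\cdot\,)\otimes A^{m}$ for a \emph{fixed} $A$ independent of $f$ (Corollary~\ref{c11}). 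Hypothesis~(2) is used only in Lemma~\ref{l42}: it guarantees that a tensor of sections $u^{\otimes m}$ on the $m$-fold fibre product actually lies in $\mathscr I(\varphi_{m})$, so that $(f_{m})_{\ast}(K_{X_{m}/Y}\otimes L_{m}\otimes\mathscr I(\varphi_{m}))=\mathcal E^{\otimes m}$. Applying the global-generation result to every $f_{m}$ then gives $S^{m}\mathcal E\otimes H$ globally generated for all $m$ with $H$ fixed, and nefness of $\mathcal O_{\mathcal E}(1)$ follows.

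\textbf{The gap in your argument.} The paper is explicit (Sect.~\ref{sec:asymptotic}) that the collection $\varphi=\{\varphi_{U}\}$ attached to $\mathscr I(f,\|L\|)$ is \emph{not} a global metric on $L$: the $\varphi_{U}$ are built from generators of the relative base ideal over coordinate balls $U\subset Y$ and in general do not patch. So your sentence ``$i\Theta_{L,\varphi}\geq0$'' and the ensuing appeal to Berndtsson's theorem are not justified; there is no global semi-positive singular metric on $L$ given by the hypotheses (hypothesis~(1) gives one on $-K_{X/Y}\otimes L^{p}$, not on $L$). The paper circumvents this precisely by twisting with $f^{\ast}A$ and extending sections, which is why the proof passes through vanishing and global generation rather than through a curvature argument on $\mathcal E$. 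Secondly, even granting a global $\varphi$, you yourself flag that passing from ``$(\mathcal E,h)$ Griffiths semi-positive'' to ``$\mathcal E$ nef'' requires $\mathscr I(h)=\mathcal O_{Y}$, and your paragraph explaining how (1) and (2) force this is a heuristic, not a proof; in the paper these hypotheses are consumed at entirely different, and concrete, places (Theorem~\ref{t13} and Lemma~\ref{l42} respectively). Finally, the base-change compatibility $\nu^{\ast}\mathscr I(f,\|L\|)\simeq\mathscr I(g,\|L_{C}\|)$ that your reduction to curves needs is not established and is not used by the paper at all.
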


For a line bundle $H$, $H^{k}$ refers to its $k$-th tensor power with the convention that $H^{0}=\mathcal{O}_{X}$ and $H^{k}=(H^{\ast})^{-k}$ for $k<0$. $-K_{X/Y}$ is the relative anti-canonical bundle. Remember in \cite{Wu21}, Theorem 6.1 we show that $\mathcal{E}$ is always torsion-free, hence the conclusion of Theorem \ref{t12} is interpreted as that $\mathcal{E}$ is nef as a torsion-free coherent sheaf (see Sect.\ref{sec:positivity}). 

The requirements (1), (2) are verified in other situations (see Corollaries \ref{c41} and \ref{c42}) apart from Theorem \ref{t11}. 

We use the same strategy as in \cite{Ko86a,Ko86b,Vie82b,Vie83} to prove Theorem \ref{t12}. The ideal is expanded as follows: first we should prove an injectivity theorem.

\begin{theorem}\label{t13}
Let $f:X\rightarrow Y$ be a fibration between projective manifolds $X$ and $Y$. Let $L$ be a line bundle on $X$ with $\kappa(L,f)\geqslant0$, and let $A^{\prime}$ be the pullback of a sufficient ample line bundle $A$ on $Y$. Fix $p\gg0$ and divisible enough that computes $\mathscr{I}(f,\|L\|)$. Assume that there exists a (singular) metric $\tau$ on $-K_{X/Y}\otimes L^{p}$ such that $i\Theta_{-K_{X/Y}\otimes L^{p},\tau}\geqslant0$ and $\mathscr{I}(\tau)=\mathcal{O}_{X}$. 

Then for a (non-zero) section $s$ of $A^{\prime}$, the multiplication map induced by the tensor product with $s$
\[
\Phi:H^{q}(X,K_{X}\otimes L\otimes A^{\prime}\otimes\mathscr{I}(f,\|L\|))\rightarrow H^{q}(X,K_{X}\otimes L\otimes (A^{\prime})^{2}\otimes\mathscr{I}(f,\|L\|))
\]
is well-defined and injective for any $q\geqslant0$. 
\end{theorem}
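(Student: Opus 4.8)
The plan is to realise $\Phi$ as the ``multiplication by $s$'' map in a Koll\'ar-type injectivity theorem for a line bundle carrying a (possibly singular) semi-positively curved metric, so that the whole statement reduces to exhibiting the correct metric on the twisting bundle; all the geometric input — the relative Iitaka hypothesis and the metric $\tau$ — is spent in that construction, which is where I expect the only real difficulty to lie.

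First I would produce a singular metric $h_L$ on $L$ with $i\Theta_{L,h_L}\geqslant0$ on all of $X$ and $\mathscr{I}(h_L)=\mathscr{I}(f,\|L\|)$. This is not formal: $\kappa(L,f)\geqslant0$ only makes $L$ effective along the fibres, and it is exactly here that $\tau$ is used. Write $pL=K_{X/Y}\otimes(-K_{X/Y}\otimes L^{p})$. By hypothesis $(-K_{X/Y}\otimes L^{p},\tau)$ is semi-positively curved with $\mathscr{I}(\tau)=\mathcal{O}_{X}$, so the $L^{2}$-metric on $f_{\ast}\big(K_{X/Y}\otimes(-K_{X/Y}\otimes L^{p})\big)=f_{\ast}(pL)$ built from $\tau$ is (Griffiths-)semi-positive by the positivity of direct images (Berndtsson; see also \cite{Ber09,Wu21}); pulling back, the evaluation $f^{\ast}f_{\ast}(pL)\to pL$ together with the fact that $p$ computes $\mathscr{I}(f,\|L\|)$ (so that $pL$ is relatively generated modulo $\mathscr{I}(f,\|pL\|)$) transfers this to a singular metric on $pL$ with semi-positive curvature current and multiplier ideal $\mathscr{I}(f,\|pL\|)$; its $p$-th root is the desired $h_L$. (If this is already recorded in Sect.\ref{sec:asymptotic}, one simply quotes it.)

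With $h_L$ in hand, set $F:=L\otimes A'$ and $h_F:=h_L\otimes f^{\ast}h_A$ for a smooth metric $h_A$ on $A$ with $\omega_A:=i\Theta_{A,h_A}>0$. Then $i\Theta_{F,h_F}=i\Theta_{L,h_L}+f^{\ast}\omega_A\geqslant0$, and in fact $i\Theta_{F,h_F}\geqslant f^{\ast}\omega_A=i\Theta_{A',f^{\ast}h_A}$; moreover $\mathscr{I}(h_F)=\mathscr{I}(h_L)=\mathscr{I}(f,\|L\|)$ since $f^{\ast}h_A$ is smooth, and $\sup_X|s|_{f^{\ast}h_A}<\infty$ by compactness. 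Since $K_X\otimes F=K_X\otimes L\otimes A'$ and $K_X\otimes F\otimes A'=K_X\otimes L\otimes (A')^{2}$, and tensoring with $s$ obviously carries the ideal sheaf $\mathscr{I}(f,\|L\|)$ into itself, $\Phi$ is well-defined. Now one invokes the injectivity theorem for singular metrics in the lineage of Koll\'ar--Enoki--Ohsawa (cf. \cite{Ko86a,Ko86b,Dem12}): if $i\Theta_{F,h_F}\geqslant0$, $i\Theta_{F,h_F}\geqslant i\Theta_{A',f^{\ast}h_A}$ and $|s|_{f^{\ast}h_A}$ is bounded, then $\otimes s:H^q(X,K_X\otimes F\otimes\mathscr{I}(h_F))\to H^q(X,K_X\otimes F\otimes A'\otimes\mathscr{I}(h_F))$ is injective for every $q$, which is precisely the assertion.

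For completeness one can reprove this injectivity input by harmonic theory: put a complete K\"ahler metric on $X\setminus Z$, $Z$ the polar set of $h_L$ together with the zero divisor of $s$, and represent a class in the source by an $L^{2}$ harmonic $F$-valued $(n,q)$-form $u$ with respect to $h_F$; Enoki's Bochner--Kodaira--Nakano identity, using $i\Theta_{F,h_F}\geqslant0$, forces $\bar\partial u=0$ and the vanishing of $\langle e(i\Theta_{F,h_F})\Lambda u,u\rangle$, whence (via $i\Theta_{F,h_F}\geqslant i\Theta_{A',f^{\ast}h_A}$ and the boundedness of $|s|_{f^{\ast}h_A}$) if $su=\bar\partial\beta$ then $su$ is again harmonic, so $su=0$ by the $L^{2}$-norm identity, so $u\equiv0$ off $Z$ and the class is zero; the usual approximation of $h_L$ by less singular weights and exhaustion by complete metrics dispose of the singularities. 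As announced, the crux is the first step — converting the merely fibrewise effectivity of $L$ into a genuine semi-positively curved singular metric on $L$ over all of $X$ with the prescribed multiplier ideal $\mathscr{I}(f,\|L\|)$ — which is exactly what the hypothesis $i\Theta_{-K_{X/Y}\otimes L^{p},\tau}\geqslant0$, $\mathscr{I}(\tau)=\mathcal{O}_{X}$ is designed to supply through the Ohsawa--Takegoshi / positivity-of-direct-images machinery; the harmonic argument is then classical, if technically delicate near $Z$.
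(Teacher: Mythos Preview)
Your reduction to a Koll\'ar--Enoki-type injectivity theorem (the paper's Theorem \ref{t31}) with $(H,\varphi_H)=(L\otimes A',h_L\otimes f^{\ast}h_A)$ and $(M,\varphi_M)=(A',f^{\ast}h_A)$ is exactly the paper's endgame, and your harmonic-theory sketch is the standard proof of that input. The genuine difference is in \emph{how} the semi-positively curved metric with multiplier ideal $\mathscr{I}(f,\|L\|)$ is produced.

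The paper does \emph{not} build a global metric on $L$. Instead it uses the Ohsawa--Takegoshi extension theorem (Manivel's version, Theorem \ref{t32}): writing $L^{p}\otimes A'=K_X\otimes(-K_{X/Y}\otimes L^{p})\otimes f^{\ast}(A\otimes K_Y^{-1})$ and using $\tau$ for the curvature hypothesis, every section of $L^{p}$ on a general fibre extends to a global section of $L^{p}\otimes A'$. Extending the local generators $u_{i,U}$ of the relative base ideal (tensored with generators $s_j$ of $A'$) yields global sections $v_{ij}\in H^{0}(X,L^{p}\otimes A')$; the metric $\chi=\log\sum|v_{ij}|^{2}$ is then a \emph{global} algebraic metric on $L^{p}\otimes A'$ with $i\Theta\geqslant0$ and $\mathscr{I}(\tfrac{1}{p}\chi)=\mathscr{I}(f,\|L\|)$, and one applies Theorem \ref{t31} with $\varphi_H=\tfrac{1}{p}\chi+\tfrac{p-1}{p}\psi$ for a smooth semi-positive metric $\psi$ on $A'$.

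Your route instead invokes the positivity of direct images \`a la Berndtsson--P\u{a}un--Takayama: since $pL=K_{X/Y}\otimes(-K_{X/Y}\otimes L^{p})$ and $(-K_{X/Y}\otimes L^{p},\tau)$ has semi-positive curvature with trivial multiplier ideal, the Narasimhan--Simha metric on $f_{\ast}(pL)$ is Griffiths semi-positive, and the induced relative Bergman-kernel metric on $pL$ has semi-positive curvature current; its $p$-th root is your $h_L$ on $L$ itself. This is correct (it is precisely the content of \cite{PT14}, which you should cite alongside \cite{Ber09}), and it buys you a metric on $L$ rather than on $L\otimes A'$ --- conceptually cleaner, but at the cost of importing the full direct-image positivity machinery, whereas the paper only needs the comparatively elementary $L^{2}$-extension theorem. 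One point to watch in your write-up: the identification $\mathscr{I}(h_L)=\mathscr{I}(f,\|L\|)$ requires knowing that the singular Hermitian metric on $f_{\ast}(pL)$ is locally bounded (so contributes no extra singularities to the quotient metric beyond the relative base locus); this holds here because $\mathscr{I}(\tau)=\mathcal{O}_X$ makes every fibre integral finite, but it deserves a sentence.
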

Combine with Theorem \ref{t13} and the fact that $\mathcal{E}$ is torsion-free, we obtain the following Koll\'{a}r-type vanishing theorem.

\begin{theorem}\label{t14}
Let $f:X\rightarrow Y$ be a fibration between projective manifolds $X$ and $Y$. Let $L$ be a holomorphic line bundle on $X$ with $\kappa(L,f)\geqslant0$. Fix $p\gg0$ and divisible enough that computes $\mathscr{I}(f,\|L\|)$. Assume that there exists a (singular) metric $\tau$ on $-K_{X/Y}\otimes L^{p}$ such that $i\Theta_{-K_{X/Y}\otimes L^{p},\tau}\geqslant0$ and $\mathscr{I}(\tau)=\mathcal{O}_{X}$. If $A$ is a sufficient ample line bundle (but independent of $f$ and $L$) on $Y$, then for any $i>0$ and $q\geqslant0$,
\[
H^{i}(Y,R^{q}f_{\ast}(K_{X}\otimes L\otimes\mathscr{I}(f,\|L\|))\otimes A)=0.
\] 
\end{theorem}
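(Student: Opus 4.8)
My plan is to deduce it from Theorem \ref{t13} exactly the way Koll\'{a}r deduces his vanishing theorem from an injectivity theorem. Write $F:=K_X\otimes L\otimes\mathscr{I}(f,\|L\|)$ and $\mathcal{G}_q:=R^qf_{\ast}F$, so that by the projection formula the Leray spectral sequence of $f$ for $F\otimes f^{\ast}A$ reads
\[
E_2^{p,q}=H^p(Y,\mathcal{G}_q\otimes A)\ \Longrightarrow\ H^{p+q}(X,F\otimes f^{\ast}A).
\]
Applying Theorem \ref{t13} to $A$, then to $A^2$, then to $A^4$, and composing, I obtain for every $k$ an \emph{injective} map $H^n(X,F\otimes f^{\ast}A)\hookrightarrow H^n(X,F\otimes f^{\ast}A^{2^k})$, namely multiplication by $f^{\ast}(s^{2^k-1})$ for a non-zero $s\in H^0(Y,A)$; being induced by a morphism of sheaves $F\otimes f^{\ast}A\to F\otimes f^{\ast}A^{2^k}$, it is filtered for the Leray filtrations. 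Choosing $k$ large, Serre vanishing on $Y$ gives $H^p(Y,\mathcal{G}_q\otimes A^{2^k})=0$ for $p>0$, so the Leray spectral sequence of $F\otimes f^{\ast}A^{2^k}$ collapses onto the column $p=0$ and the Leray filtration on its abutment has no positive step. Injectivity then forces the first step of the Leray filtration on $H^n(X,F\otimes f^{\ast}A)$ to vanish, i.e.\ $E_\infty^{p,q}=0$ for all $p\ge1$ and every sufficiently ample $A$ (in particular $H^n(X,F\otimes f^{\ast}A)$ embeds into $H^0(Y,\mathcal{G}_n\otimes A)$, hence already vanishes once $n$ exceeds the relative dimension).

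It remains to promote $E_\infty^{p,q}=0$ to $E_2^{p,q}=H^p(Y,\mathcal{G}_q\otimes A)=0$ for $p\ge1$, and here I would induct on $\dim Y$, the case $\dim Y=0$ being empty. Write the given sufficiently ample $A$ as $A=A_1\otimes A_2$ with $A_1$ very ample and both $A_1,A_2$ sufficiently ample, pick $D\in|A_1|$ general, and set $X_D:=f^{-1}(D)$, $f_D:=f|_{X_D}\colon X_D\to D$. All hypotheses of the theorem descend to $(X_D,f_D,L|_{X_D})$: adjunction gives $K_{X_D/D}=K_{X/Y}|_{X_D}$, the (relative asymptotic) multiplier ideal restricts so that $\mathscr{I}(f,\|L\|)|_{X_D}=\mathscr{I}(f_D,\|L|_{X_D}\|)$, and $\tau|_{X_D}$ is a metric on $-K_{X_D/D}\otimes(L|_{X_D})^p$ with semipositive curvature and trivial multiplier ideal --- all for a general $D$, by the relevant restriction theorems. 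Since $\mathcal{O}_X(X_D)=f^{\ast}A_1$, in $(\mathcal{G}_q\otimes A)|_D$ the conormal twist by $f^{\ast}A_1^{-1}$ cancels the $A_1$-factor of $A$, leaving exactly $R^q(f_D)_{\ast}\bigl(K_{X_D}\otimes L|_{X_D}\otimes\mathscr{I}(f_D,\|L|_{X_D}\|)\bigr)\otimes(A_2|_D)$ --- the ``$\mathcal{G}_q$'' attached to $f_D$, twisted by the sufficiently ample $A_2|_D$. By the inductive hypothesis, $H^i\bigl(D,(\mathcal{G}_q\otimes A)|_D\bigr)=0$ for every $i\ge1$ and every $q$.

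Now I feed this back. For $D$ general the exact sequence $0\to F\otimes f^{\ast}A_2\to F\otimes f^{\ast}A\to (F\otimes f^{\ast}A)|_{X_D}\to0$ pushes forward, via base change along $D\hookrightarrow Y$ and the vanishing of the connecting maps --- whose images would be subsheaves of $\mathcal{G}_{q+1}\otimes A_2$ supported on $D$, which are zero because $\mathcal{G}_0=\mathcal{E}\otimes K_Y$ is torsion-free (\cite{Wu21}) and, for $q\ge1$, $\mathcal{G}_{q+1}=R^{q+1}f_{\ast}F$ is likewise torsion-free by the same argument through Theorem \ref{t13} that yields torsion-freeness in Koll\'{a}r's setting --- to short exact sequences $0\to\mathcal{G}_q\otimes A_2\to\mathcal{G}_q\otimes A\to (\mathcal{G}_q\otimes A)|_D\to0$. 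Their cohomology sequences, together with the vanishing of the previous paragraph, show that $H^i(Y,\mathcal{G}_q\otimes A_2)\to H^i(Y,\mathcal{G}_q\otimes A)$ is an isomorphism for $i\ge2$ and onto for $i=1$. Replacing $A_2$ by $A_2\otimes A_1$, then $A_2\otimes A_1^{2}$, and so on, the iterated isomorphism identifies $H^i(Y,\mathcal{G}_q\otimes A)$ with $H^i(Y,\mathcal{G}_q\otimes A\otimes A_1^{m})$ for all $m$, and the latter is $0$ for $m\gg0$ by Serre vanishing; hence $H^i(Y,\mathcal{G}_q\otimes A)=0$ for all $i\ge2$. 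Finally, reinserting $E_2^{p,q}=0$ for $p\ge2$ into the Leray spectral sequence of $F\otimes f^{\ast}A$: no differential enters $E^{1,q}_{\bullet}$ and every differential leaving it lands in a group that now vanishes, so $E_2^{1,q}=E_\infty^{1,q}=0$, i.e.\ $H^1(Y,\mathcal{G}_q\otimes A)=0$. This closes the induction.

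The step I expect to cause the most trouble is the descent in the second paragraph: one must check that the chosen $p$ still ``computes'' the relative asymptotic multiplier ideal for $f_D$, that the asymptotic multiplier sheaves and the multiplier ideal $\mathscr{I}(\tau)$ genuinely restrict as claimed to a \emph{general} ample divisor (a restriction-theorem argument carried through the defining limit), and that ``sufficiently ample'' can be fixed uniformly --- depending only on $Y$ --- so that the factorisation $A=A_1\otimes A_2$ and the bundle $A_2|_D$ on $D$ all qualify; this uniformity is precisely what the statement demands, since $A$ is required to be independent of $f$ and $L$. The auxiliary point of torsion-freeness of the higher $R^qf_{\ast}F$ must also be extracted from Theorem \ref{t13}. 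Once these are in place, the conormal cancellation, the vanishing of the connecting maps, and the spectral-sequence bookkeeping are routine.
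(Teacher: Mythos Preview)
Your argument is correct and is essentially Koll\'{a}r's strategy, which is exactly what the paper does: induction on $\dim Y$ through a general hyperplane pulled back from $Y$, torsion-freeness of the higher direct images to split the long exact sequence, Serre vanishing for the high twist, and the injectivity of Theorem~\ref{t13} to close the case $i=1$. The only organizational differences are that the paper cuts once by a divisor in $|A^m|$ with $m\gg0$ (so Serre vanishing applies to the middle term immediately, avoiding your iteration), treats $i>1$ first and then $i=1$ via the Leray edge map rather than establishing $E_\infty^{p,q}=0$ upfront, and cites \cite{Wu21} and \cite{FuM16}, respectively, for the torsion-freeness and the restriction $\mathscr{I}(\varphi|_{H'})=\mathscr{I}(\varphi)|_{H'}$ that you rightly flag as the delicate inputs.
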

Here we emphasize that the choice of $A$ in both of the theorems above is independent of $f$ and $L$. In particular, Theorem \ref{t14} is not an easy consequence of asymptotic Serre vanishing theorem \cite{Har77}.

A direct consequence of Theorem \ref{t14} is
\begin{corollary}\label{c11}
Under the same assumptions as in Theorem \ref{t14}, if $A$ is moreover globally generated and $A^{\prime}$ is a nef line bundle on $Y$, then the sheaf
\[
R^{q}f_{\ast}(K_{X}\otimes L\otimes\mathscr{I}(f,\|L\|))\otimes A^{m}\otimes A^{\prime}
\] 
is globally generated for any $q\geqslant0$ and $m\geqslant\dim Y+1$.
\end{corollary}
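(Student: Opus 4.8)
The plan is to derive this from Theorem~\ref{t14} by the standard Castelnuovo--Mumford regularity formalism, now that $A$ is additionally assumed globally generated. Put $\mathcal{F}:=R^{q}f_{\ast}(K_{X}\otimes L\otimes\mathscr{I}(f,\|L\|))$, a coherent sheaf on $Y$, and $n:=\dim Y$. Recall that a coherent sheaf $\mathcal{G}$ on $Y$ is \emph{$0$-regular} with respect to the ample and globally generated line bundle $A$ if $H^{i}(Y,\mathcal{G}\otimes A^{-i})=0$ for all $i>0$, and that a $0$-regular sheaf is globally generated. Thus it suffices to show that $\mathcal{G}:=\mathcal{F}\otimes A^{m}\otimes A'$ is $0$-regular, i.e. that
\[
H^{i}\bigl(Y,\mathcal{F}\otimes A^{m-i}\otimes A'\bigr)=0\qquad\text{for every }i>0 .
\]

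For $i>n$ this holds automatically, by Grothendieck vanishing on the $n$-dimensional variety $Y$, irrespective of the coefficient sheaf. For $1\leqslant i\leqslant n$ the assumption $m\geqslant n+1$ gives $m-i\geqslant 1$, so I would factor
\[
A^{m-i}\otimes A'=A\otimes N,\qquad N:=A^{m-i-1}\otimes A',
\]
where $N$ is nef: indeed $m-i-1\geqslant 0$ because $i\leqslant n\leqslant m-1$, so $A^{m-i-1}$ is a non-negative power of the ample bundle $A$, hence nef, and $A'$ is nef by hypothesis. The bundle $A\otimes N$ is ample, it involves only $A$, $A'$ and $Y$, so it is independent of $f$ and $L$, and it is still ``sufficiently ample'' in the sense demanded by Theorem~\ref{t14}: tensoring the already sufficiently ample $A$ by a nef bundle does not affect the positivity threshold there. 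Applying Theorem~\ref{t14} with $A\otimes N$ in place of $A$ then yields $H^{i}(Y,\mathcal{F}\otimes A\otimes N)=0$ for all $i>0$, which is precisely the required vanishing. Combining the two ranges, $\mathcal{G}$ is $0$-regular with respect to $A$ and therefore globally generated, which gives the claim.

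The only delicate point, and the one I regard as carrying the actual content rather than the formal bookkeeping, is the stability of the ``sufficiently ample'' hypothesis of Theorem~\ref{t14} under tensoring with an arbitrary nef line bundle; this is exactly what forces the hypothesis ``$A'$ nef'' and what prevents the statement from following from ordinary asymptotic Serre vanishing. Everything else is the classical regularity argument together with the trivial vanishing in cohomological degrees exceeding $\dim Y$.
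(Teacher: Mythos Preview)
Your argument is correct and is essentially the same as the paper's: both show that $\mathcal{F}\otimes A^{m}\otimes A'$ is $0$-regular with respect to $A$ by applying Theorem~\ref{t14} with the ample bundle $A^{m-i}\otimes A'$ in place of $A$, and then invoke Mumford's regularity theorem. The paper simply asserts the vanishing in one line without isolating the range $i>\dim Y$ or the nef-twist stability issue you flag, but the underlying reasoning is identical.
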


Then we prove that $\mathcal{E}$ is weakly positive in the sense of Viehweg (see Sect.\ref{sec:positivity}) via the fibre product method of Viehweg \cite{Vie82b,Vie83}, and Theorem \ref{t12} follows. We leave the details in the text.

This paper is organised as follows. We first recall some background materials, including the definition of the positivity concerning a torsion-free coherent sheaf, the asymptotic multiplier ideal sheaf and so on. Then, we proceed to prove Theorems \ref{t13} and \ref{t14} in Sect.\ref{sec:injectivity}. The proof of Theorems \ref{t11} and \ref{t12} are presented in Sect.\ref{sec:direct}.

\section{Preliminary}
\label{sec:preliminary}
In this section we will introduce some basic materials. For clarity and for convenience of later reference, it will be done in the following setting: $f:X\rightarrow Y$ is a fibration between two projective manifolds, and $L$ is a holomorphic line bundle on $X$ with $\kappa(L,f)\geqslant0$. Here $\kappa(L,f)$ is the relative Iitaka dimension that is explained immediately.

\subsection{Relative Iitaka dimension}
\label{sec:iitaka}
This part is borrowed from \cite{FEM13}.

Let $l$ be the dimension of a general fibre $F$ of $f$. We have
\begin{proposition}\label{p21}
For every coherent sheaf $\mathcal{G}$ on $X$, there is $C>0$ (independent of $L$) such that
\[
\textrm{rank}(f_{\ast}(\mathcal{G}\otimes L^{k}))\leqslant Ck^{l} \textrm{ for all }k\gg0.
\]
\begin{proof}
Let us write $L=A\otimes B^{-1}$, with $A$ and $B$ are very ample line bundles. For every $k$, if we choose $E$ general in the complete linear system $|B^{k}|$, then a
local defining function of $E$ is a non-zero divisor on $\mathcal{G}$, in which case we have an inclusion
\[
H^{0}(F,\mathcal{G}\otimes L^{k})\hookrightarrow H^{0}(F,\mathcal{G}\otimes A^{k}).
\]
Since $A$ is very ample, we know that there is a polynomial $P\in\mathbb{Q}[t]$ \cite{Laz04} with $\deg(P)\leqslant l$ such that $h^{0}(F,\mathcal{G}\otimes A^{k})=P(k)$ for $k\gg0$. Therefore $h^{0}(F,\mathcal{G}\otimes L^{k})\leqslant P(k)\leqslant Ck^{l}$ for a suitable $C>0$ and all $k\gg0$.
\end{proof}
\end{proposition}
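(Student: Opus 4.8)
\emph{Proof proposal.} The plan is to reduce the statement about the rank of the direct image to an estimate for the number of sections over a general fibre, and then to bound that number by a Hilbert polynomial. First I would use that for a coherent sheaf $\mathcal{F}$ on $X$ one has $\textrm{rank}(f_{\ast}\mathcal{F})=h^{0}(F,\mathcal{F}|_{F})$ for $F$ a general fibre of $f$; applying this to $\mathcal{F}=\mathcal{G}\otimes L^{k}$ reduces everything to bounding $h^{0}(F,(\mathcal{G}\otimes L^{k})|_{F})$ by $Ck^{l}$ with $C$ independent of $k$ (and, crucially, of $L$).

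Next I would trade the possibly non-positive line bundle $L$ for a very ample one. Pick a very ample $H$ on $X$ and $n\gg0$ so that $A:=L\otimes H^{n}$ is very ample; then $L=A\otimes B^{-1}$ with $B:=H^{n}$ also very ample. For each $k$, since $|B^{k}|$ is base-point free I may choose a general $E\in|B^{k}|$ whose local equations are non-zero-divisors on $\mathcal{G}$, i.e. $E$ avoids the finitely many associated primes of $\mathcal{G}$, and which moreover meets the general fibre $F$ properly. Tensoring $0\to\mathcal{O}_{X}(-E)\to\mathcal{O}_{X}\to\mathcal{O}_{E}\to0$ with $\mathcal{G}\otimes A^{k}$ and using that the equation of $E$ is a non-zero-divisor on $\mathcal{G}$ yields an inclusion $\mathcal{G}\otimes L^{k}=\mathcal{G}\otimes A^{k}\otimes\mathcal{O}_{X}(-E)\hookrightarrow\mathcal{G}\otimes A^{k}$, hence an inclusion on $H^{0}(F,-)$, so $h^{0}(F,(\mathcal{G}\otimes L^{k})|_{F})\leqslant h^{0}(F,(\mathcal{G}\otimes A^{k})|_{F})$.

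Finally, since $A$ is very ample and $\dim F=l$, the Euler characteristic $\chi(F,(\mathcal{G}\otimes A^{k})|_{F})$ is given by a polynomial $P\in\mathbb{Q}[t]$ of degree $\leqslant l$, and by Serre vanishing $h^{0}=\chi$ for $k\gg0$; as $\deg P\leqslant l$ this is at most $Ck^{l}$ for a suitable $C>0$ and all $k\gg0$. The constant $C$ depends only on $\mathcal{G},A,B,F$, not on $L$ once the splitting $L=A\otimes B^{-1}$ has been fixed, which gives the asserted independence. The only point requiring care — more a bookkeeping matter than a genuine obstacle — is the general-position choice of $E$: it must simultaneously avoid the associated primes of $\mathcal{G}$ and restrict suitably to the general fibre, and one should apply the identity $\textrm{rank}(f_{\ast}\mathcal{F})=h^{0}(F,\mathcal{F}|_{F})$ to a fibre generic with respect to all of these conditions.
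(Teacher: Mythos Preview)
Your proposal is correct and follows essentially the same approach as the paper: write $L=A\otimes B^{-1}$ with $A,B$ very ample, use a general $E\in|B^{k}|$ whose equation is a non-zero-divisor on $\mathcal{G}$ to obtain the inclusion $H^{0}(F,\mathcal{G}\otimes L^{k})\hookrightarrow H^{0}(F,\mathcal{G}\otimes A^{k})$, and bound the right-hand side by the Hilbert polynomial of degree $\leqslant l$. You spell out several points the paper leaves implicit (the reduction $\textrm{rank}(f_{\ast}\mathcal{F})=h^{0}(F,\mathcal{F}|_{F})$, the construction of the splitting via a very ample $H$, the ideal-sheaf sequence giving the inclusion, and Serre vanishing), but the argument is the same.
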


\begin{definition}\label{d21}
The relative Iitaka dimension $\kappa(L,f)$ of $L$ is the biggest integer $M$ such that there is $C>0$ satisfying
\[
\textrm{rank}(f_{\ast}L^{k})\geqslant Ck^{M} \textrm{ for all }k\gg0
\]
with the convention that $\kappa(L,f)=-\infty$ if $\textrm{rank }f_{\ast}L^{k}=0$.
\end{definition}
Note that $\kappa(L,f)$ takes value in $\{-\infty, 0,1,...,l\}$ by Proposition \ref{p21}. In particular, if $\kappa(L,f)=l$, we say that $L$ is $f$-big.

\subsection{Positivity}
\label{sec:positivity}
Let $E$ be a holomorphic vector bundle of rank $r$ over $X$. By $\mathbb{P}(E^{\ast})$, we denote the projectivised bundle of $E^{\ast}$ and by $\mathcal{O}_{E}(1):=\mathcal{O}_{\mathbb{P}(E^{\ast})}(1)$ we denote the tautological line bundle. Let
\[
\pi:\mathbb{P}(E^{\ast})\rightarrow X
\]
be the canonical projection.

We now collect the definitions of positivity from \cite{DPS94,DPS01,Pau16,PT14,Vie82b,Vie83} as follows.

\begin{definition}\label{d22}
\begin{enumerate}
  \item $E$ is weakly positive in the sense of Viehweg if, on some Zariski open subset $U\subset X$, for any integer $a>0$, there exists an integer $b>0$ such that $S^{ab}(E)\otimes bA$ is generated by global sections on $U$. Here, $A$ is an auxiliary ample line bundle over $X$ and $S^{ab}(E)$ refers to the $ab$-th symmetric product of $E$.
  \item $E$ is pseudo-effective if $\mathcal{O}_{E}(1)$ is pseudo-effective and the image of the non-nef locus 
  \[
  \text{NNef}(\mathcal{O}_{E}(1))
  \] 
  (i.e., the union of all curves $C$ on $\mathbb{P}(E^{\ast})$ with $\mathcal{O}_{E}(1)\cdot C<0$)   under $\pi$ is a proper subset of $X$.
  \item $E$ is nef if $\mathcal{O}_{E}(1)$ is nef.
  \item $E$ is almost nef if there exists a countable family of proper subvarieties $Z_{i}$ of $X$ such that $E|_{C}$ is nef for any curve $C\not\subset\cup_{i}Z_{i}$.
  \item $E$ is ample if $\mathcal{O}_{E}(1)$ is ample.
\end{enumerate}
\end{definition}

\begin{rem}\label{r21}
The relationships among these notions are summarised below.

\xymatrix@R=0.5cm{
\textrm{nef} &  \ar@{=>}[dr]  &    &                       \\
 &  & \textrm{pseudo-effective} \ar@{=>}[r] & \textrm{almost nef}\\
\textrm{weakly positive} & \ar@{=>}[ur]  &    &}
\end{rem}

These notions extend to a torsion-free coherent sheaf. Assume that $\mathcal{E}$ is a torsion-free coherent sheaf; then it is locally free outside of a 2-codimensional subvariety $Z$. We say that $\mathcal{E}$ is nef (resp. pseudo-effective, almost nef,...) if $\mathcal{E}|_{X\setminus Z}$ has the corresponding property. The reader can refer to \cite{Pau16,PT14} for the more details.

\subsection{The asymptotic multiplier ideal sheaf}
\label{sec:asymptotic}
This part is mostly collected from \cite{Laz04b}.

Recall that for an arbitrary ideal sheaf $\mathfrak{a}\subset\mathcal{O}_{X}$, the associated multiplier ideal sheaf is defined as follows: let $\mu:\tilde{X}\rightarrow X$ be a smooth modification such that $\mu^{\ast}\mathfrak{a}=\mathcal{O}_{\tilde{X}}(-E)$, where $E$ has simple normal crossing support. Then given a positive real number $c>0$ the multiplier ideal sheaf is defined as
\[
\mathscr{I}(c\cdot\mathfrak{a}):=\mu_{\ast}\mathcal{O}_{\tilde{X}}(K_{\tilde{X}/X}-\lfloor cE\rfloor).
\]
Here $\lfloor E\rfloor$ means the round-down.

Now assume that $\kappa(L)\geqslant0$. Fix a positive real number $c>0$. For $k>0$ consider the complete linear series $|L^{k}|$, and form the multiplier ideal sheaf
\[
\mathscr{I}(\frac{c}{k}|L^{k}|)\subseteq\mathcal{O}_{X},
\]
where $\mathscr{I}(\frac{c}{k}|L^{k}|):=\mathscr{I}(\frac{c}{k}\cdot\mathfrak{a}_{k})$ with $\mathfrak{a}_{k}$ being the base-ideal of $|L^{k}|$. It is not hard to verify that for every integer $p\geqslant1$ one has the inclusion
\[
\mathscr{I}(\frac{c}{k}|L^{k}|)\subseteq\mathscr{I}(\frac{c}{pk}|L^{pk}|).
\]
Therefore the family of ideals
\[
\{\mathscr{I}(\frac{c}{k}|L^{k}|)\}_{(k\geqslant0)}
\]
has a unique maximal element from the ascending chain condition on ideals.

\begin{definition}\label{d23}
The asymptotic multiplier ideal sheaf associated to $c$ and $|L|$,
\[
\mathscr{I}(c\|L\|)
\]
is defined to be the unique maximal member among the family of ideals $\{\mathscr{I}(\frac{c}{k}|L^{k}|)\}$.
\end{definition}

By definition, $\mathscr{I}(c\|L\|)=\mathscr{I}(\frac{c}{k}|L^{k}|)$ for some $k$. Let $u_{1},...,u_{m}$ be a basis of $H^{0}(X,L^{k})$, then the base-ideal of $|L^{k}|$ is just $\mathcal{I}(u_{1},...,u_{m})$. Let $\varphi=\log(|u_{1}|^{2}+\cdots+|u_{m}|^{2})$, which is a singular metric on $L^{k}$. We verify that
\[
\mathscr{I}(\frac{c}{k}|L^{k}|)=\mathscr{I}(\frac{c}{k}\varphi).
\]
Indeed, let $\mu:\tilde{X}\rightarrow X$ be the smooth modification such that $\mu^{\ast}\mathcal{I}(u_{1},...,u_{m})=\mathcal{O}_{\tilde{X}}(-E)$, where $E$ has simple normal crossing support. Then it is computed in \cite{Dem12} that
\[
\mathscr{I}(\frac{c}{k}\varphi)=\mu_{\ast}\mathcal{O}_{\tilde{X}}(K_{\tilde{X}/X}-\lfloor \frac{c}{k}E\rfloor),
\]
which coincides with the definition of $\mathscr{I}(\frac{c}{k}|L^{k}|)$. In summary, we have
\[
\mathscr{I}(c\|L\|)=\mathscr{I}(\frac{c}{k}\varphi),
\]
and $\frac{1}{k}\varphi$ is called the singular metric on $L$ associated to $\mathscr{I}(\|L\|)$.

Next, we introduce the relative variant. Let $f:X\rightarrow Y$ be a surjective morphism between projective manifolds, and $L$ a line bundle on $X$ whose restriction to a general fibre of $f$ has non-negative Iitaka dimension. Then there is a naturally defined homomorphism
\[
\rho:f^{\ast}f_{\ast}L\rightarrow L.
\]
Let $\mu:\tilde{X}\rightarrow X$ be a smooth modification of $|L|$ with respect to $f$, having the property that the image of the induced homomorphism
\[
\mu^{\ast}\rho:\mu^{\ast}f^{\ast}f_{\ast}L\rightarrow \mu^{\ast}L
\]
is the subsheaf $\mu^{\ast}L\otimes\mathcal{O}_{\tilde{X}}(-E)$ of $\mu^{\ast}L$, $E$ being an effective divisor on $\tilde{X}$ such that $E+\textrm{except}(\mu)$ has simple normal crossing support. Here $\textrm{except}(\mu)$ is the exceptional divisor of $\mu$. Given $c>0$ we define
\[
\mathscr{I}(f,c|L|)=\mu_{\ast}\mathcal{O}_{\tilde{X}}(K_{\tilde{X}/X}-\lfloor cE\rfloor).
\]
Similarly, $\{\mathscr{I}(f,\frac{c}{k}|L^{k}|)\}_{(k\geqslant0)}$ has a unique maximal element.

\begin{definition}\label{d24}
The relative asymptotic multiplier ideal sheaf associated to $f$, $c$ and $|L|$,
\[
\mathscr{I}(f,c\|L\|)
\]
is defined to be the unique maximal member among the family of ideals $\{\mathscr{I}(f,\frac{c}{k}|L^{k}|)\}$.
\end{definition}

By definition, $\mathscr{I}(f,c\|L\|)=\mathscr{I}(f,\frac{c}{k}|L^{k}|)$ for some $k$. Let $\rho_{k}$ be the naturally defined homomorphism
\[
\rho_{k}:f^{\ast}f_{\ast}L^{k}\rightarrow L^{k}.
\]
Let $\mu:\tilde{X}\rightarrow X$ be the smooth modification of $|L^{k}|$ with respect to $f$ such that 
\[
\textrm{Im}(\mu^{\ast}\rho_{k})=\mu^{\ast}L^{k}\otimes\mathcal{O}_{\tilde{X}}(-E).
\] 
Consider $\mu_{\ast}\mathcal{O}_{\tilde{X}}(-E)$ which is an ideal sheaf on $X$. Pick a local coordinate ball $U$ of $Y$, and let $u_{1},...,u_{m}$ be the generators of $\mu_{\ast}\mathcal{O}_{\tilde{X}}(-E)$ on $f^{-1}(U)$. The existence of these generators is obvious concerning the fact that $\textrm{Im}(\mu^{\ast}\rho_{k})=\mu^{\ast}L^{k}\otimes\mathcal{O}_{\tilde{X}}(-E)$. Moreover they can be seen as the sections of $\Gamma(f^{-1}(U),L^{k})$.

Now let $\varphi_{U}=\log(|u_{1}|^{2}+\cdots+|u_{m}|^{2})$, which is a singular metric on $L^{k}|_{f^{-1}(U)}$. It is then easy to verify that
\[
\mathscr{I}(\frac{c}{k}\varphi_{U})=\mathscr{I}(f,\frac{c}{k}|L^{k}|)\textrm{ on }f^{-1}(U).
\]
Furthermore, if $v_{1},...,v_{m}$ are alternative generators and $\psi_{U}=\log(|v_{1}|^{2}+\cdots+|v_{m}|^{2})$, obviously we have $\mathscr{I}(\frac{c}{k}\varphi_{U})=\mathscr{I}(\frac{c}{k}\psi_{U})$. Hence all the $\mathscr{I}(\frac{c}{k}\varphi_{U})$ patch together to give a globally defined multiplier ideal sheaf $\mathscr{I}(\frac{c}{k}\varphi)$ such that
\[
\mathscr{I}(\frac{c}{k}\varphi)=\mathscr{I}(f,\frac{c}{k}|L^{k}|)=\mathscr{I}(f,c\|L\|)\textrm{ on }X.
\]
One should be careful that $\{\frac{1}{k}\varphi_{U}\}$ won't give a globally defined metric on $L$ in general. Hence $\frac{1}{k}\varphi$ is interpreted as the collection of functions $\{\frac{1}{k}\varphi_{U}\}$ by abusing the notation, which is called the collection of (local) singular metrics on $L$ associated to $\mathscr{I}(f,c\|L\|)$.

Now we collect some elementary properties from \cite{Laz04b}. Recall that the relative base-ideal of $|L|$ is by definition the image of the homomorphism
\[
f^{\ast}f_{\ast}L\otimes L^{-1}\rightarrow\mathcal{O}_{X}
\]
determined by $\rho$.
\begin{proposition}\label{p22}
Let $f:X\rightarrow Y$ be a surjective morphism between projective manifolds, and $H_{1}, H_{2}$ are line bundles on $X$ with non-negative relative Iitaka dimension. $k$ and $m$ are arbitrary positive integers. Let $L$ be a line bundle on $X$ with non-negative Iitaka dimension.
\begin{enumerate}
\item Let $\mathfrak{a}_{k,f}=\mathfrak{a}(f,|H^{k}_{1}|)$ be the base-ideal of $|H^{k}_{1}|$ relative to $f$. There exits an integer $k_{0}$ such that for every $k\geqslant k_{0}$, the canonical map $\rho_{k}:f^{\ast}f_{\ast}H^{k}_{1}\rightarrow H^{k}_{1}$ factors through the inclusion $H^{k}_{1}\otimes\mathscr{I}(f,\|H^{k}_{1}\|)$, i.e.
\[
\mathfrak{a}_{k,f}\subseteq\mathscr{I}(f,\|H^{k}_{1}\|).
\]
Equivalently, the natural map
\[
f_{\ast}(H^{k}_{1}\otimes\mathscr{I}(f,\|H^{k}_{1}\|))\rightarrow f_{\ast}(H^{k}_{1})
\]
is an isomorphism.
\item $\mathfrak{a}_{m,f}\cdot\mathscr{I}(f,\|H^{k}_{2}\|)\subseteq\mathscr{I}(f,\|H^{m}_{1}\otimes H^{k}_{2}\|)$.
\item $\mathscr{I}(f,\|H^{k}_{1}\|)\supseteq\mathscr{I}(f,\|H^{k+1}_{1}\|)$ for every $k$.
\item $\mathscr{I}(\|L\|)\subseteq\mathscr{I}(f,\|L\|)$.
\end{enumerate}
\begin{proof}
(i) is proved in \cite{Laz04b}, Proposition 11.2.15.

(ii) Fix $p\gg0$ and divisible enough that computes all of the multiplier ideals $\mathscr{I}(f,\|H^{m}_{1}\|)$, $\mathscr{I}(f,\|H^{k}_{2}\|)$ and $\mathscr{I}(f,\|H^{m}_{1}\otimes H^{k}_{2}\|)$. Let $\mathfrak{b}_{k,f}$ be the base-ideal of $|H^{k}_{2}|$ relative to $f$, and let $\mathfrak{c}_{m,k,f}$ be the base-ideal of $|H^{m}_{1}\otimes H^{k}_{2}|$ relative to $f$. Let $\mu:\tilde{X}\rightarrow X$ be the smooth modification of $\mathfrak{a}_{m,f}$, $\mathfrak{a}_{pm,f}$, $\mathfrak{b}_{pk,f}$ and $\mathfrak{c}_{pm,pk,f}$, such that
\[
\begin{split}
 &\mu^{\ast}\mathfrak{a}_{m,f}=\mathcal{O}_{\tilde{X}}(-E), \mu^{\ast}\mathfrak{a}_{pm,f}=\mathcal{O}_{\tilde{X}}(-F), \\
  &\mu^{\ast}\mathfrak{b}_{pk,f}=\mathcal{O}_{\tilde{X}}(-G) \textrm{ and } \mu^{\ast}\mathfrak{c}_{pm,pk,f}=\mathcal{O}_{\tilde{X}}(-H),
\end{split}
\]
where $E=\sum a_{i}E_{i}$, $F=\sum b_{i}E_{i}$, $G=\sum c_{i}E_{i}$ and $H=\sum d_{i}E_{i}$ have simple normal crossing support. Then for every $i$,
\[
d_{i}\leqslant b_{i}+c_{i}\leqslant pa_{i}+c_{i}
\]
and consequently
\[
-a_{i}-\lfloor\frac{c_{i}}{p}\rfloor\leqslant -\lfloor\frac{d_{i}}{p}\rfloor.
\]
Thus
\[
\begin{split}
\mathfrak{a}_{m,f}\cdot\mathscr{I}(f,\|H^{k}_{2}\|)&\subseteq\mu_{\ast}\mathcal{O}_{\tilde{X}}(-E+K_{\tilde{X}/X}-\lfloor\frac{1}{p}G)\rfloor)\\
&\subseteq\mu_{\ast}\mathcal{O}_{\tilde{X}}(K_{\tilde{X}/X}-\lfloor\frac{1}{p}H)\rfloor)\\
&=\mathscr{I}(f,\|H^{m}_{1}\otimes H^{k}_{2}\|).
\end{split}
\]

(iii) Fix $p\gg0$ and divisible enough that computes both of the multiplier ideals $\mathscr{I}(f,k\|H_{1}\|)$ and $\mathscr{I}(f,\|H^{k}_{1}\|)$. Then
\[
\begin{split}
\mathscr{I}(f,\|H^{k}_{1}\|)&=\mathscr{I}(f,\frac{1}{p}|H^{pk}_{1}|)\\
&=\mathscr{I}(f,\frac{k}{pk}|H^{pk}_{1}|)\\
&=\mathscr{I}(f,k\|H_{1}\|).
\end{split}
\]
Now we have
\[
\begin{split}
\mathscr{I}(f,\|H^{k}_{1}\|)&=\mathscr{I}(f,k\|H_{1}\|)\\
&\supseteq\mathscr{I}(f,(k+1)\|H_{1}\|)\\
&=\mathscr{I}(f,\|H^{k+1}_{1}\|).
\end{split}
\]

(iv) Fix $p\gg0$ and divisible enough that computes both of the multiplier ideals $\mathscr{I}(f,\|L\|)$ and $\mathscr{I}(\|L\|)$. Let $\{u_{i}\}$ be a basis of $H^{0}(X,L^{p})$. Then for a general fibre $F$ of $f$, $u_{i}|_{F}$ is a section of $H^{0}(F,L^{p})$. Hence $\{u_{i}\}$ forms an ideal that is contained in the relative base-ideal of $|L^{p}|$. Now the inclusion is obvious.
\end{proof}
\end{proposition}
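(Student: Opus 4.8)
The plan is to settle the four assertions by reducing each to a comparison of round-downs on one sufficiently fine log resolution, after fixing $p$ large and divisible enough to simultaneously compute every asymptotic multiplier ideal that occurs. Part (i) I would not reprove: it is the relative form of the standard fact (\cite{Laz04b}, Proposition 11.2.15) that the base-ideal of a complete linear series lies inside the associated asymptotic multiplier ideal. Granting $\mathfrak{a}_{k,f}\subseteq\mathscr{I}(f,\|H_{1}^{k}\|)$, the equivalent reformulation is formal: by construction the image of $\rho_{k}:f^{\ast}f_{\ast}H_{1}^{k}\rightarrow H_{1}^{k}$ is $\mathfrak{a}_{k,f}\cdot H_{1}^{k}$, so $\rho_{k}$ factors through $H_{1}^{k}\otimes\mathscr{I}(f,\|H_{1}^{k}\|)$; applying $f_{\ast}$ and using surjectivity of $f_{\ast}f^{\ast}f_{\ast}H_{1}^{k}\rightarrow f_{\ast}H_{1}^{k}$ sandwiches $f_{\ast}(H_{1}^{k}\otimes\mathscr{I}(f,\|H_{1}^{k}\|))$ between the image of $\rho_{k}$ and $f_{\ast}H_{1}^{k}$, forcing equality.

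For (ii) I would work over a coordinate ball $U\subseteq Y$, where the relative base-ideals are genuinely generated by sections, and choose $p\gg0$ divisible enough to compute $\mathscr{I}(f,\|H_{1}^{m}\|)$, $\mathscr{I}(f,\|H_{2}^{k}\|)$ and $\mathscr{I}(f,\|H_{1}^{m}\otimes H_{2}^{k}\|)$ at once, together with a single smooth modification $\mu:\tilde{X}\rightarrow X$ that log-resolves $\mathfrak{a}_{m,f}$, $\mathfrak{a}_{pm,f}$, the base-ideal of $|H_{2}^{pk}|$ relative to $f$, and the base-ideal of $|H_{1}^{pm}\otimes H_{2}^{pk}|$ relative to $f$, whose pullbacks I write as $\mathcal{O}_{\tilde{X}}(-E)$, $\mathcal{O}_{\tilde{X}}(-F)$, $\mathcal{O}_{\tilde{X}}(-G)$, $\mathcal{O}_{\tilde{X}}(-H)$ with simple normal crossing support. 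Multiplying sections gives $(\mathfrak{a}_{m,f})^{p}\subseteq\mathfrak{a}_{pm,f}$ and an analogous inclusion for the other two ideals, which read coefficientwise as $F\leqslant pE$ and $H\leqslant F+G$; hence $\tfrac{1}{p}H\leqslant E+\tfrac{1}{p}G$, so $\lfloor\tfrac{1}{p}H\rfloor\leqslant E+\lfloor\tfrac{1}{p}G\rfloor$, and twisting by $K_{\tilde{X}/X}$ and pushing forward yields $\mathfrak{a}_{m,f}\cdot\mathscr{I}(f,\|H_{2}^{k}\|)\subseteq\mathscr{I}(f,\|H_{1}^{m}\otimes H_{2}^{k}\|)$ over $f^{-1}(U)$; these local inclusions do not depend on the chosen generators and therefore patch.

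For (iii) the engine is the rescaling identity $\mathscr{I}(f,\|H_{1}^{k}\|)=\mathscr{I}(f,\tfrac{1}{p}|H_{1}^{pk}|)=\mathscr{I}(f,\tfrac{k}{pk}|H_{1}^{pk}|)=\mathscr{I}(f,k\|H_{1}\|)$, valid for $p$ large and divisible, combined with the obvious monotonicity of the defining family of $\mathscr{I}(f,c\|H_{1}\|)$ in $c$; thus $\mathscr{I}(f,\|H_{1}^{k}\|)=\mathscr{I}(f,k\|H_{1}\|)\supseteq\mathscr{I}(f,(k+1)\|H_{1}\|)=\mathscr{I}(f,\|H_{1}^{k+1}\|)$. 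For (iv), a basis of $H^{0}(X,L^{p})$ restricts to sections on a general fibre of $f$, so over each $f^{-1}(U)$ the global base-ideal of $|L^{p}|$ is contained in the relative base-ideal of $|L^{p}|$, whence $\mathscr{I}(\tfrac{1}{p}|L^{p}|)\subseteq\mathscr{I}(f,\tfrac{1}{p}|L^{p}|)$; choosing $p$ to compute both asymptotic ideals gives $\mathscr{I}(\|L\|)\subseteq\mathscr{I}(f,\|L\|)$.

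I expect the only genuine obstacle to be bookkeeping rather than geometry: one must be certain that a single $p$ can be taken large and divisible enough to compute all of the asymptotic multiplier ideals appearing in a given assertion, and that one modification $\mu$ log-resolves all the relevant base-ideals at once; once this is arranged, every remaining inequality is a routine comparison of floors. A minor point worth checking in (ii) and (iv) is that the construction really is local over $Y$ and independent of the choice of generators of the pushed-forward ideal sheaves, so that the local inclusions glue to global ones.
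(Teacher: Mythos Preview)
Your proposal is correct and follows essentially the same route as the paper: both defer (i) to \cite{Laz04b}, Proposition 11.2.15; both handle (ii) via a common log resolution of $\mathfrak{a}_{m,f}$, $\mathfrak{a}_{pm,f}$, the relative base-ideal of $|H_{2}^{pk}|$, and that of $|H_{1}^{pm}\otimes H_{2}^{pk}|$, reducing to the coefficient inequality $H\leqslant F+G\leqslant pE+G$; and both treat (iii) via the rescaling identity $\mathscr{I}(f,\|H_{1}^{k}\|)=\mathscr{I}(f,k\|H_{1}\|)$ and (iv) via the inclusion of the global base-ideal in the relative one. Your extra remarks on gluing over coordinate balls and on the equivalent reformulation in (i) are sound but not present in the paper's more terse argument.
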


As a by-product of the formula
\[
\mathscr{I}(f,\|H^{k}_{1}\|)=\mathscr{I}(f,k\|H_{1}\|)
\]
in (iii), if $\varphi=\{\varphi_{U}\}$ is the collection of metrics associated to $\mathscr{I}(f,\|H_{1}\|)$, $k\varphi=\{k\varphi_{U}\}$ is the collection of metrics associated to $\mathscr{I}(f,\|H^{k}_{1}\|)$. 

\subsection{Fibration}
\label{sec:fibration}
Next, we recall the definition of a fibre product \cite{Har77}.

\begin{definition}\label{d25}
Let $f:X\rightarrow Y$ be a fibration between two projective manifolds $X$ and $Y$. The fibre product, denoted by $(X\times_{Y}X,p^{2}_{1},p^{2}_{2})$, is a projective manifold coupled with two morphisms (we will also call the manifold $X\times_{Y}X$ itself the fibre product if nothing is confused) that satisfies the following properties:

1.The diagram
\[
\begin{array}[c]{ccc}
X\times_{Y}X&\stackrel{p^{2}_{2}}{\rightarrow}&X\\
\scriptstyle{p^{2}_{1}}\downarrow&&\downarrow\scriptstyle{f}\\
X&\stackrel{f}{\rightarrow}&Y
\end{array}
\]
commutes.

2.If there is another projective manifold $Z$ with morphisms $q_{1}, q_{2}$ such that the diagram
\[
\begin{array}[c]{ccc}
Z&\stackrel{q_{2}}{\rightarrow}&X\\
\scriptstyle{q_{1}}\downarrow&&\downarrow\scriptstyle{f}\\
X&\stackrel{f}{\rightarrow}&Y
\end{array}
\]
commutes, then there must exist a unique $g:Z\rightarrow X\times_{Y}X$ such that $p^{2}_{1}\circ g=q_{1},p^{2}_{2}\circ g=q_{2}$.

We inductively define the $m$-fold fibre product, and denote it by $\underbrace{X\times_{Y}\cdot\cdot\cdot\times_{Y}X}_{m}$. Then, the two projections are denoted by
\[
 p^{m}_{1}:X\times_{Y}\cdot\cdot\cdot\times_{Y}X\rightarrow X
\]
and
\[
 p^{m}_{2}:\underbrace{X\times_{Y}\cdot\cdot\cdot\times_{Y}X}_{m}\rightarrow \underbrace{X\times_{Y}\cdot\cdot\cdot\times_{Y}X}_{m-1}
\]
respectively.
\end{definition}

The meaning of the fibre product is clear from the viewpoint of geometry. In particular, if $y$ is a regular value of $f$,
\[
 (X\times_{Y}\cdot\cdot\cdot\times_{Y}X)_{y}=X_{y}\times\cdot\cdot\cdot\times X_{y}.
\]

We collect the following two lemmas from \cite{Har77} without proof for the later use.

\begin{lemma}[(Projection formula)]\label{l21}
If $f:X\rightarrow Y$ is a holomorphic morphism between two projective manifolds $X$ and $Y$, $\mathcal{F}$ is a coherent sheaf on $X$, and $\mathcal{E}$ is a locally free sheaf on $Y$, then there is a natural isomorphism
\[
  f_{\ast}(\mathcal{F}\otimes f^{\ast}\mathcal{E})\cong f_{\ast}(\mathcal{F})\otimes\mathcal{E}.
\]
\end{lemma}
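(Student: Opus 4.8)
The final statement to prove is the Projection Formula (Lemma \ref{l21}), quoted from Hartshorne. Below is a proof proposal.

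\bigskip

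\textit{Proof proposal.} The plan is to reduce to the local, affine situation and then exploit that $\mathcal{E}$ is locally free to interchange the pushforward with the tensor product. First I would note that the isomorphism is asserted to be \emph{natural}, so it suffices to construct a canonical morphism $\theta:f_{\ast}(\mathcal{F})\otimes\mathcal{E}\to f_{\ast}(\mathcal{F}\otimes f^{\ast}\mathcal{E})$ and check it is an isomorphism. The morphism $\theta$ comes from adjunction: there is a canonical map $f^{\ast}(f_{\ast}\mathcal{F}\otimes\mathcal{E})\cong f^{\ast}f_{\ast}\mathcal{F}\otimes f^{\ast}\mathcal{E}\to\mathcal{F}\otimes f^{\ast}\mathcal{E}$ obtained by tensoring the counit $f^{\ast}f_{\ast}\mathcal{F}\to\mathcal{F}$ with $\mathrm{id}_{f^{\ast}\mathcal{E}}$, and applying the adjunction $(f^{\ast},f_{\ast})$ once more yields $\theta$. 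Naturality in both $\mathcal{F}$ and $\mathcal{E}$ is then automatic from the naturality of units and counits.

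\bigskip

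Next I would verify that $\theta$ is an isomorphism, and for this the question is local on $Y$: both sides are sheaves on $Y$, $f_{\ast}$ commutes with restriction to open subsets, and $f^{\ast}$ is compatible with restriction, so we may replace $Y$ by a small open $V$ over which $\mathcal{E}$ is trivial, say $\mathcal{E}|_{V}\cong\mathcal{O}_{V}^{\oplus r}$. Over such a $V$ we have $f^{\ast}\mathcal{E}|_{f^{-1}(V)}\cong\mathcal{O}_{f^{-1}(V)}^{\oplus r}$, hence
\[
f_{\ast}\bigl(\mathcal{F}\otimes f^{\ast}\mathcal{E}\bigr)\big|_{V}\cong f_{\ast}\bigl(\mathcal{F}^{\oplus r}\bigr)\big|_{V}\cong\bigl(f_{\ast}\mathcal{F}\big|_{V}\bigr)^{\oplus r}\cong\bigl(f_{\ast}\mathcal{F}\otimes\mathcal{E}\bigr)\big|_{V},
\]
using that $f_{\ast}$ commutes with finite direct sums. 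One then checks that under these identifications the map induced by $\theta$ is the identity on each summand; this is a direct unwinding of the adjunction maps on the trivializing patch and is the only place where a small computation is needed. Since being an isomorphism is a local property, $\theta$ is a global isomorphism.

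\bigskip

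The main (and essentially only) obstacle is bookkeeping: one must be careful that the local trivializations of $\mathcal{E}$ are glued compatibly so that the locally-defined isomorphisms patch to the global, basis-independent map $\theta$ — which is precisely why constructing $\theta$ canonically \emph{before} localizing is the right order of operations. There is no hard geometry here; the statement holds for any morphism of (locally) ringed spaces and any locally free $\mathcal{E}$ of finite rank, and in our setting $f:X\to Y$ is a projective morphism of smooth varieties, so no coherence or finiteness subtleties arise beyond the standard ones already recorded in \cite{Har77}.
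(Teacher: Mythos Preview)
Your argument is correct and is the standard proof of the projection formula: build the comparison map $\theta$ canonically via the $(f^{\ast},f_{\ast})$ adjunction, then verify it is an isomorphism locally by trivializing $\mathcal{E}$ and using that $f_{\ast}$ commutes with finite direct sums. No step is missing, and the remark that the construction works for arbitrary morphisms of ringed spaces with $\mathcal{E}$ locally free of finite rank is accurate.

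As for the comparison with the paper: there is nothing to compare against. The paper explicitly states this lemma ``without proof'' and simply attributes it to \cite{Har77}; it is recorded only for later reference in the fibre-product computations of Sect.~\ref{sec:direct}. Your write-up is essentially the argument behind Hartshorne's Exercise~II.5.1(d), so you have supplied exactly the proof the paper chose to omit.
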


\begin{lemma}[(Base change)]\label{l22}
Assume that $f:X\rightarrow Y, v:X^{\prime}\rightarrow X$ and $g:X^{\prime}\rightarrow Y^{\prime}$ are holomorphic morphisms between projective manifolds $X,X^{\prime},Y$ and $Y^{\prime}$. Let $\mathcal{F}$ be a coherent sheaf on $X$. $u:Y^{\prime}\rightarrow Y$ is a smooth morphism, such that
\[
\begin{array}[c]{ccc}
X^{\prime}&\stackrel{v}{\rightarrow}&X\\
\scriptstyle{g}\downarrow&&\downarrow\scriptstyle{f}\\
Y^{\prime}&\stackrel{u}{\rightarrow}&Y
\end{array}
\]
commutes. Then for all $q\geqslant0$ there is a natural isomorphism
\[
   u^{\ast}R^{q}f_{\ast}(\mathcal{F})\cong R^{q}g_{\ast}(v^{\ast}\mathcal{F}).
\]
\end{lemma}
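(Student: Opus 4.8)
The plan is to reduce to the flat base-change theorem for sheaf cohomology over an affine base, exploiting that a smooth morphism is flat. Since $u\colon Y'\to Y$ is smooth it is flat, and the assertion is local on $Y'$; so first I would choose affine opens $V=\operatorname{Spec}A\subseteq Y$ and $V'=\operatorname{Spec}B\subseteq Y'$ with $u(V')\subseteq V$ (such pairs cover $Y'$), so that $A\to B$ is a flat ring map. Writing $W=f^{-1}(V)\subseteq X$ and $W'=g^{-1}(V')\subseteq X'$, one has $W'=W\times_{V}V'$ with $v|_{W'}\colon W'\to W$ the projection --- here one uses that the commutative square in the statement is a \emph{fibre} square, as it is in every application of the lemma in this paper. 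Via the equivalence between quasi-coherent sheaves on an affine scheme and modules over its ring of functions, $R^{q}f_{\ast}(\mathcal{F})|_{V}$ and $R^{q}g_{\ast}(v^{\ast}\mathcal{F})|_{V'}$ correspond to $H^{q}(W,\mathcal{F})$ and $H^{q}(W',v^{\ast}\mathcal{F})$, and the claimed isomorphism becomes the statement that the natural base-change map
\[
H^{q}(W,\mathcal{F})\otimes_{A}B\longrightarrow H^{q}(W',v^{\ast}\mathcal{F})
\]
is an isomorphism.

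To prove this last assertion I would pick a finite cover $\mathfrak{U}=\{U_{i}\}$ of $W$ by affine opens of $X$ (possible since $W$ is quasi-compact); as $X$ is separated, every finite intersection $U_{i_{0}\cdots i_{p}}=\operatorname{Spec}R_{i_{0}\cdots i_{p}}$ is again affine. Then $v^{-1}(U_{i_{0}\cdots i_{p}})\cap W'=U_{i_{0}\cdots i_{p}}\times_{V}V'=\operatorname{Spec}(R_{i_{0}\cdots i_{p}}\otimes_{A}B)$ is affine, these sets cover $W'$, and the standard description of quasi-coherent pullback over affines gives $\Gamma(v^{-1}(U_{i_{0}\cdots i_{p}})\cap W',v^{\ast}\mathcal{F})=\Gamma(U_{i_{0}\cdots i_{p}},\mathcal{F})\otimes_{A}B$. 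Hence the \v{C}ech complexes satisfy $C^{\bullet}(\mathfrak{U},\mathcal{F})\otimes_{A}B=C^{\bullet}(v^{-1}\mathfrak{U},v^{\ast}\mathcal{F})$. Since $\mathcal{F}$ and $v^{\ast}\mathcal{F}$ are coherent, hence quasi-coherent, and the covers are affine, these complexes compute $H^{\bullet}(W,\mathcal{F})$ and $H^{\bullet}(W',v^{\ast}\mathcal{F})$; and since $B$ is flat over $A$, the exact functor $-\otimes_{A}B$ commutes with taking cohomology of $C^{\bullet}(\mathfrak{U},\mathcal{F})$, so
\[
H^{q}(W',v^{\ast}\mathcal{F})=H^{q}\big(C^{\bullet}(\mathfrak{U},\mathcal{F})\otimes_{A}B\big)=H^{q}(W,\mathcal{F})\otimes_{A}B.
\]

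Finally I would glue: letting $V,V'$ range over affine covers of $Y$ and $Y'$ and checking that the resulting isomorphisms are compatible with restriction --- which, after passing to common refinements of the \v{C}ech covers, follows from functoriality of the \v{C}ech construction --- yields the desired global isomorphism $u^{\ast}R^{q}f_{\ast}(\mathcal{F})\cong R^{q}g_{\ast}(v^{\ast}\mathcal{F})$ of coherent sheaves on $Y'$. I expect the only genuinely delicate points to be this gluing/naturality step (where one must verify independence of the chosen affine covers) together with the identification $W'=W\times_{V}V'$ (where one uses that the square is Cartesian); the computations themselves are routine. Alternatively, one may replace the \v{C}ech argument by an $f$-acyclic (e.g.\ flasque) resolution of $\mathcal{F}$ whose terms remain $g$-acyclic after pullback by $v$; this is precisely the proof of flat base change in \cite{Har77}, III.9.3, from which the lemma is quoted.
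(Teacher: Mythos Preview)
The paper does not prove this lemma at all: it is explicitly ``collected from \cite{Har77} without proof,'' so there is no in-paper argument to compare against. Your sketch is the standard flat base change proof (essentially \cite{Har77}, III, Proposition 9.3), and it is correct as far as it goes.

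You are right to flag that the statement as written only asks the square to commute, not to be Cartesian; without $X'\cong X\times_{Y}Y'$ the conclusion is false in general, and your proof uses this identification at the step $W'=W\times_{V}V'$. In the paper the lemma is only applied to the projections from a fibre product (Lemma \ref{l41}), where the square is Cartesian by construction, so the implicit hypothesis is harmless in context---but it is worth noting, as you did, that it is an unstated assumption rather than a consequence of the hypotheses given.
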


\section{The injectivity theorem and the vanishing theorem}
\label{sec:injectivity}

\subsection{The injectivity theorem}
Theorem \ref{t13} is a direct consequence of the following variant of the Koll\'{a}r-type injectivity theorem developed in \cite{Eno93,Fuj12,GoM17,Ko86a,Ko86b,Mat14,Mat15a,Mat18}.

\begin{theorem}\label{t31}
Let $(H,\varphi_{H})$ and $(M,\varphi_{M})$ be line bundles with
(singular) metrics on a projective manifold $X$. Assume the following conditions:
\begin{enumerate}
\item[(1)] $i\Theta_{H,\varphi_{H}}\geqslant0$ and $i\Theta_{M,\varphi_{M}}\geqslant\gamma$ for some smooth real $(1,1)$-form $\gamma$ on $X$; and
\item[(2)] $i\Theta_{H,\varphi_{H}}\geqslant\varepsilon i\Theta_{M,\varphi_{M}}$ for some positive number $\varepsilon$.
\end{enumerate}

Then for a (non-zero) section $s$ of $M$ with $\sup_{X}|s|^{2}e^{-\varphi_{M}}<\infty$, the multiplication map induced by the tensor product with $s$
\[
\Phi:H^{q}(X,K_{X}\otimes H\otimes\mathscr{I}(\varphi_{H}))\rightarrow H^{q}(X,K_{X}\otimes H\otimes M\otimes\mathscr{I}(\varphi_{H}+\varphi_{M}))
\]
is well-defined and injective for any $q\geqslant0$. 

\begin{proof}
The proof is nothing but repeating the argument in, say \cite{GoM17}, so we omit it here.
\end{proof}
\end{theorem}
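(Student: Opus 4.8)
The plan is to prove the statement by harmonic integral theory, following the transcendental approach of Enoki and its refinements in \cite{Eno93,Fuj12,Mat14,GoM17}. Since $X$ is projective it carries a K\"ahler form $\omega$, and the first step is to realise the two cohomology groups as spaces of $L^{2}$ harmonic forms. Concretely, $K_{X}\otimes H\otimes\mathscr{I}(\varphi_{H})$ is the sheaf of germs of $H$-valued holomorphic $n$-forms that are locally square-integrable against $e^{-\varphi_{H}}$, and the weighted $\bar\partial$-complex provides a fine resolution; thus $H^{q}(X,K_{X}\otimes H\otimes\mathscr{I}(\varphi_{H}))$ is computed by the $L^{2}(\varphi_{H})$ Dolbeault complex, and likewise the target is computed by the $L^{2}(\varphi_{H}+\varphi_{M})$ complex. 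Each class then admits a harmonic representative $u$ for the Laplacian $\Box_{\varphi_{H}}$. Well-definedness of $\Phi$ is immediate: since $|su|^{2}e^{-\varphi_{H}-\varphi_{M}}=|u|^{2}e^{-\varphi_{H}}\cdot|s|^{2}e^{-\varphi_{M}}$ and $\sup_{X}|s|^{2}e^{-\varphi_{M}}<\infty$, multiplication by $s$ carries $L^{2}(\varphi_{H})$ forms to $L^{2}(\varphi_{H}+\varphi_{M})$ forms, and it commutes with $\bar\partial$ because $s$ is holomorphic.

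The heart of the argument is an Enoki-type lemma: \emph{if $u$ is $\Box_{\varphi_{H}}$-harmonic, then $su$ is $\Box_{\varphi_{H}+\varphi_{M}}$-harmonic}. Here $\bar\partial(su)=s\,\bar\partial u=0$ is automatic from holomorphy of $s$ and $\bar\partial u=0$, so the content is $\bar\partial^{*}_{\varphi_{H}+\varphi_{M}}(su)=0$. I would extract this from the Bochner--Kodaira--Nakano identity. For the semipositive bundle $(H,\varphi_{H})$, BKN applied to the harmonic form $u$ forces the three nonnegative terms $\|D'u\|^{2}$, $\|D'^{*}u\|^{2}$ and $\langle[i\Theta_{H,\varphi_{H}},\Lambda_{\omega}]u,u\rangle$ to vanish separately; in particular the curvature term vanishes pointwise. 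Condition (2), $i\Theta_{H,\varphi_{H}}\geqslant\varepsilon\,i\Theta_{M,\varphi_{M}}$, then controls the analogous term for $M$ by that for $H$, so that introducing the additional weight $e^{-\varphi_{M}}$ creates no new contribution and $su$ stays harmonic. Granting this, injectivity is formal: if $[u]\in\ker\Phi$, then $su=\bar\partial w$ for some $L^{2}(\varphi_{H}+\varphi_{M})$ form $w$, so $su$ is at once harmonic and $\bar\partial$-exact, hence zero by the $L^{2}$ Hodge decomposition; since $s\not\equiv0$ this gives $u\equiv0$ off the divisor $\{s=0\}$, and therefore $u\equiv0$ by continuity, so $[u]=0$.

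The main obstacle is that $\varphi_{H}$ and $\varphi_{M}$ are genuinely singular, so the clean Hodge-theoretic statements above are not directly available on $X$. I would handle this exactly as in \cite{GoM17,Mat14}: approximate $\varphi_{H}$ from above by a decreasing sequence of smooth metrics $\varphi_{\epsilon}$ with $i\Theta_{H,\varphi_{\epsilon}}\geqslant-\epsilon\,\omega$ (Demailly regularisation, where condition (1) guarantees the uniform lower curvature bounds), work on the Zariski-open locus where the metrics are smooth equipped with a complete K\"ahler metric, and run the harmonic theory there. One then needs uniform $L^{2}$ estimates to produce harmonic representatives $u_{\epsilon}$, together with a compactness argument extracting a weak limit $u$ that still represents the given class, still obeys the harmonicity used above, and for which the curvature identities survive the passage to the limit. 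Controlling this limit---in particular ruling out loss of mass and the discontinuity of the harmonic kernel as $\epsilon\to0$---is the delicate point, and it is precisely the technical content supplied by \cite{GoM17}. Once it is in place, the Enoki lemma and the formal injectivity argument complete the proof.
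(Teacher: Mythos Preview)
Your proposal is correct and is precisely the approach the paper has in mind: the paper's own proof consists solely of the sentence ``repeating the argument in \cite{GoM17},'' and what you have written is an accurate outline of that argument---$L^{2}$ Dolbeault resolution, Bochner--Kodaira--Nakano plus Enoki's trick exploiting conditions (1) and (2), and Demailly regularisation together with the weak-limit machinery of \cite{GoM17,Mat14} to handle the singular metrics. There is nothing to add or correct.
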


Now we prove Theorem \ref{t13}.
\begin{proof}[Proof of Theorem \ref{t13}]
We claim that for a general fibre $F$ of $f$, if $u$ is a section of 
\[
H^{0}(F,L^{p}), 
\]
it extends to a global section $\tilde{u}$ of $H^{0}(X,L^{p}\otimes A^{\prime})$. 

In order to prove this claim, we first recall the Ohsawa--Takegoshi extension theorem as follows:
\begin{theorem}[(Theorem 1, \cite{Man93})]\label{t32}
Let $X$ be a projective manifold, and let $Z\subset X$ be the zero set of a holomorphic section $s\in H^{0}(X,E)$ of a vector bundle $E\rightarrow X$; the subset $Z$ is assumed to be non-singular and of codimension $r=\textrm{rank }(E)$. Let $(G,\varphi)$ be a line bundle on $X$, endowed with a (singular) metric $\varphi$, such that
\begin{enumerate}
\item[(a)] $i\Theta_{G,\varphi}+i\partial\bar{\partial}\log|s|^{2}\geqslant0$ on $X$;
\item[(b)] $i\Theta_{G,\varphi}+i\partial\bar{\partial}\log|s|^{2}\geqslant\frac{1}{\alpha}\frac{<i\Theta_{E}s,s>}{|s|^{2}}$ for some $\alpha\geqslant1$; and 
\item[(c)] $|s|^{2}\leqslant e^{-\alpha}$ on $X$, and the restriction of $\varphi$ on $Z$ is well-defined.
\end{enumerate}

Then every section $u\in H^{0}(Z,K_{Z}\otimes G\otimes\mathscr{I}(\varphi|_{Z}))$ admits an extension $\tilde{u}$ to $X$ such that 
\[
\int_{X}\frac{\tilde{u}\wedge\bar{\tilde{u}}e^{-\varphi}}{|s|^{2r}(-\log|s|)^{2}}\leqslant C_{\alpha}\int_{Z}\frac{|u|^{2}e^{-\varphi}}{|\Lambda^{r}(ds)|^{2}},
\]
provided the right hand side is finite.
\end{theorem}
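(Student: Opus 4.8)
The plan is to prove the extension by H\"ormander's $L^2$-method in its twisted (Ohsawa--Takegoshi) form. The strategy has three movements: produce a smooth \emph{approximate} extension $\tilde u_0$ of $u$; correct it to a holomorphic section by solving a $\bar\partial$-equation $\bar\partial v=\bar\partial\tilde u_0$; and arrange the $L^2$-estimate against the singular weight $e^{-\varphi}/|s|^{2r}$ so that finiteness of $\int_X|v|^2 e^{-\varphi}/|s|^{2r}$ --- a borderline-divergent norm because $Z=\{s=0\}$ has codimension $r$ --- forces the restriction $v|_Z$ to vanish. Since subtracting such a $v$ does not perturb the residue of $\tilde u_0$ along $Z$, the section $\tilde u=\tilde u_0-v$ will be the desired holomorphic extension, and the extra $(-\log|s|)^2$ factor in the final estimate is exactly what tames the borderline pole for $\tilde u$ itself.

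First I would work near $Z$, which is smooth of codimension $r$ with $N_{Z/X}\simeq E|_Z$, so that adjunction gives $(K_X\otimes\det E)|_Z\simeq K_Z$; thus $u$ is naturally the residue along $Z$ of a section of $K_X\otimes\det E\otimes G$, i.e. of an $(n,0)$-form with $n=\dim X$ valued in $\det E\otimes G$. Choosing a tubular neighbourhood and a cutoff $\theta$ supported there, I set $\tilde u_0=\theta\cdot(\text{a local holomorphic extension of }u)$; then $\bar\partial\tilde u_0$ is a $\bar\partial$-closed $(n,1)$-form supported where $\theta$ varies, hence of size comparable to $|s|$ there. The pole $|s|^{-2r}$ is read as a singular metric on $\det E$, so the whole estimate is carried out for the weight $\varphi$ augmented by a pole of order $|s|^{-2r}$ along $Z$.

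The analytic engine is the twisted Bochner--Kodaira--Nakano inequality on the complete K\"ahler manifold $X\setminus Z$. One introduces two auxiliary weights $\eta$ and $\lambda$, functions of $t=\log|s|^2$, with $\eta$ essentially $-t$ (positive since $t\le-\alpha<0$ by hypothesis (c)) and $\lambda$ chosen so that the twisted curvature term $\eta\,i\Theta-i\partial\bar\partial\eta-\lambda^{-1}i\partial\eta\wedge\bar\partial\eta$ is nonnegative. Expanding $i\partial\bar\partial\log|s|^2$ reproduces precisely $i\Theta_{G,\varphi}+i\partial\bar\partial\log|s|^2$ together with the bundle term $\langle i\Theta_E s,s\rangle/|s|^2$, so hypotheses (a) and (b) are exactly what is needed to absorb both the Hessian of $\log|s|^2$ and the error $\lambda^{-1}i\partial\eta\wedge\bar\partial\eta$; hypothesis (c) keeps $t\le-\alpha$, controlling the logarithmic factor and fixing the scale that becomes $C_\alpha$. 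Solving $\bar\partial v=\bar\partial\tilde u_0$ against this estimate yields $v$ with $\int_X|v|^2 e^{-\varphi}/(|s|^{2r}(-\log|s|)^2)\le C_\alpha\int_Z|u|^2 e^{-\varphi}/|\Lambda^r(ds)|^2$, the right-hand side arising as the limit of $\|\bar\partial\tilde u_0\|^2$ as the tube shrinks, computed by the coarea formula transverse to $Z$ (the Jacobian of $ds$ producing $|\Lambda^r(ds)|^2$).

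The main obstacle is exactly the higher-codimension refinement over the hypersurface case $r=1$, and it is twofold. First, the weight $|s|^{-2r}$ and the singular metric $\varphi$ prevent the relevant $\bar\partial$-complex from having closed range a priori; I would handle this by a double regularization --- replacing $|s|^2$ by $|s|^2+\varepsilon$ and $\varphi$ by a decreasing sequence of smooth metrics with controlled curvature, solving with constants uniform in the parameters, and extracting a weak $L^2$-limit, whose holomorphicity and correct residue on $Z$ are recovered from the vanishing forced by the singular norm together with Fatou's lemma for the estimate. Second, the curvature bookkeeping must now carry the Chern connection of $E$ through the term $\langle i\Theta_E s,s\rangle/|s|^2$, and verifying that (b) genuinely dominates the twist error while the transverse integration returns precisely $1/|\Lambda^r(ds)|^2$ is the technical heart of the argument; this is where the constant depending only on $\alpha$ is pinned down.
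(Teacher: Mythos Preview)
The paper does not prove this theorem at all: it is quoted as Theorem~1 of \cite{Man93} and used as a black box inside the proof of Theorem~\ref{t13}. There is therefore no ``paper's own proof'' to compare against; the author simply recalls the statement and then applies it with a specific choice of $G$, $E$, and $s$ to extend sections from a general fibre $F$ to $X$.

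Your outline is the standard Ohsawa--Takegoshi--Manivel argument (smooth approximate extension, twisted Bochner--Kodaira--Nakano inequality on $X\setminus Z$ with auxiliary weights $\eta,\lambda$ built from $\log|s|^2$, solve $\bar\partial$ with estimates, pass to the limit), and the way you identify where hypotheses (a)--(c) enter is accurate. If anything, you should be aware that what you wrote is a sketch rather than a proof: the regularization of the singular metric $\varphi$, the verification that the twisted curvature term is genuinely nonnegative after all the bookkeeping with $\langle i\Theta_E s,s\rangle/|s|^2$, and the passage to the limit preserving the residue condition each require real work (this is the content of Manivel's paper and later refinements by Demailly). But as a plan it is correct, and since the present paper offers nothing beyond the citation, there is no discrepancy to report.
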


Now the bundle that we are interested in can be decomposed as
\[
L^{p}\otimes A^{\prime}=K_{X}\otimes -K_{X/Y}\otimes L^{p}\otimes f^{\ast}(A\otimes -K_{Y}).
\]
(The bundle $A'$ is chosen in a moment.) Our goal is to show that it is effective by extending the section $0\neq u\in H^{0}(F,L^{p}\otimes\mathscr{I}(\tau))$. (Remember that $\mathscr{I}(\tau)=\mathcal{O}_{X}$.) We choose now the bundle $A$ positive enough so that
\begin{enumerate}
\item[(1)] $H^{0}(Y,A)\neq0$; and
\item[(2)] the point $y\in Y$ such that $f^{-1}(y)=F$ is the common zero set of the sections $s=(s_{j})$ of an ample line bundle $B\rightarrow Y$ and $A\otimes -K_{Y}\geqslant B^{2}$, in the sense that the difference is an ample line bundle.
\end{enumerate}
Obviously $A$ as well as $A'=f^{\ast}A$ is independent of $f$ and $L$. Property (2) gives a smooth metric $\varphi_{A\otimes -K_{Y}}$ with positive curvature. The bundle $-K_{X/Y}\otimes L^{p}\otimes f^{\ast}(A\otimes -K_{Y})$ is endowed with the metric $\tau\otimes f^{\ast}\varphi_{A\otimes -K_{Y}}$; its curvature is semipositive on $X$, and the restriction to $F$ is well-defined. The section we want to extend is $v:=u\otimes s_{A^{\prime}}$, where $s_{A^{\prime}}$ is the pullback of some nonzero section given by property (1) above. By property (2), the positivity conditions in Theorem \ref{t32} are satisfied with the bundle $G$ given by
\[
G=-K_{X/Y}\otimes L^{p}\otimes f^{\ast}(A\otimes -K_{Y}).
\]
Since
\[
i\Theta_{G,\tau\otimes f^{\ast}\varphi_{A\otimes -K_{Y}}}+i\partial\bar{\partial}\log|s|^{2}\geqslant f^{\ast}\Theta_{A\otimes -K_{Y}\otimes B^{-1}}.
\]
The right-hand side is semipositive, and it dominates the bundle $B$; thus the requirements (a)-(c) are verified.

Now the integrability condition is obviously acceptable, and by Theorem \ref{t32} we can extend the section $v$ over $X$. The claim is proved. 

We return to Theorem \ref{t13}. Note $p$ computes $\mathscr{I}(f,\|L\|)$. Remember the discussion in Sect.\ref{sec:asymptotic} and keep the notations there, there exists a collection of singular metrics $\varphi=\{\varphi_{U}\}$ defined by the sections of $\Gamma(f^{-1}(U),L^{p})$, say $\{u_{i,U}\}$, with $i\Theta_{L,\varphi_{U}}\geqslant0$ and 
\[
\mathscr{I}(\varphi)=\mathscr{I}(f,\|L\|).
\] 
Let $\{s_{j}\}$ be the sections of $A^{\prime}$ that generates $A^{\prime}$. Due to the claim before, all of the sections $\{u_{i,U}\otimes s_{j}\}$ extend over $X$ as the sections $\{v_{ij}\}$ of $H^{0}(X,L^{p}\otimes A^{\prime})$. Certainly they together define a (singular) metric $\chi$ on $L^{p}\otimes A^{\prime}$ with positive curvature current. In particular, since $\mathcal{I}(\{u_{i,U}\})=\mathcal{I}(\{u_{i,U}\otimes s_{j}\})=\mathcal{I}(\{v_{ij}\})$ on $f^{-1}(U)$,
\[
\mathscr{I}(\frac{1}{p}\chi)=\mathscr{I}(\varphi)=\mathscr{I}(f,\|L\|)\textrm{ on }f^{-1}(U)\textrm{ hence everywhere}.
\] 

Now let $\psi$ be a smooth metric on $A^{\prime}$ with semipositive curvature. Let
\[
 (H,\varphi_{H})=(L\otimes A^{\prime},\frac{1}{p}\chi+\frac{p-1}{p}\psi)\textrm{ and }(M,\varphi_{M})=(A^{\prime},\psi).
\] 
The requirements (1) and (2) of Theorem \ref{t31} are easy to verified. We then obtain the desired injectivity result by Theorem \ref{t31}.
\end{proof}

\subsection{The vanishing theorem}
Now we are ready to prove Theorem \ref{t14}.
\begin{proof}[Proof of Theorem \ref{t14}]
Let $\varphi=\{\varphi_{U}\}$ be the collection of metrics on $L|_{f^{-1}(U)}$ such that 
\[
\mathscr{I}(\varphi)=\mathscr{I}(f,\|L\|).
\] 
Let $A$ be the ample line bundle on $Y$ (independent of $f$ and $L$) picked in Theorem \ref{t13}.

By asymptotic Serre vanishing theorem \cite{Har77}, we can choose a positive integer $m_{0}$ such that for all $m\geqslant m_{0}$,
\[
    H^{i}(Y,R^{q}f_{\ast}(K_{X}\otimes L\otimes\mathscr{I}(\varphi))\otimes A^{m})=0
\]
for $i>0,\, q\geqslant0$. Fix an integer $m$ such that $m\geqslant m_{0}$ and  $A^{m}$ is very ample.

We prove the theorem by induction on $n=\dim Y$, the case $n=0$ being trivial.
Denote $A'=f^\ast (A)$ and let $H^{\prime}\in|(A^{\prime})^{m}|$ be the pull back of a general divisor $H\in|A^{m}|$. It follows from Bertini's theorem \cite{Har77} that we can assume $H$ is integral and $H^{\prime}$ is smooth (though possibly disconnected). Moreover, we can arrange the things that $\mathscr{I}(\varphi|_{H'})=\mathscr{I}(\varphi)|_{H'}$ by \cite{FuM16}, Theorem 1.10. Then we have a short exact sequence
\begin{equation}\label{e31}
\begin{split}
0&\rightarrow K_{X}\otimes L\otimes A^{\prime}\otimes\mathscr{I}(\varphi)\rightarrow K_{X}\otimes L\otimes (A^{\prime})^{m+1}\otimes\mathscr{I}(\varphi)\\
&\rightarrow K_{H^{\prime}}\otimes L\otimes A^{\prime}|_{H^{\prime}}\otimes\mathscr{I}(\varphi|_{H'})\rightarrow0
\end{split}
\end{equation}
which is induced by multiplication with a section defining $H^{\prime}$. We get from this short exact sequence a long exact sequence
\begin{equation}\label{32}
\begin{split}
0&\rightarrow f_{\ast}(K_{X}\otimes L\otimes A^{\prime}\otimes\mathscr{I}(\varphi))\rightarrow f_{\ast}(K_{X}\otimes L\otimes (A^{\prime})^{m+1}\otimes\mathscr{I}(\varphi))\\
&\rightarrow f_{\ast}(K_{H^{\prime}}\otimes L\otimes A^{\prime}|_{H^{\prime}}\otimes\mathscr{I}(\varphi|_{H'}))\rightarrow R^{1}f_{\ast}(K_{X}\otimes L\otimes A^{\prime}\otimes\mathscr{I}(\varphi))\\
&\rightarrow R^{1}f_{\ast}(K_{X}\otimes L\otimes (A^{\prime})^{m+1}\otimes\mathscr{I}(\varphi))\rightarrow\cdots
\end{split}
\end{equation}
By \cite{Wu21}, Theorem 6.1 all the higher direct images of $K_{X}\otimes L\otimes A^{\prime}\otimes\mathscr{I}(\varphi)$ are torsion-free. Also clearly the sheaves $R^{q}f_{\ast}(K_{H^{\prime}}\otimes L\otimes A^{\prime}|_{H^{\prime}}\otimes\mathscr{I}(\varphi|_{H'}))$ are torsion on $H$. Hence the long exact sequence (\ref{32}) can be split into a family of short exact sequences: for all $q\geq 0$,
\begin{equation}\label{e33}
\begin{split}
0&\rightarrow R^{q}f_{\ast}(K_{X}\otimes L\otimes A^{\prime}\otimes\mathscr{I}(\varphi))\rightarrow R^{q}f_{\ast}(K_{X}\otimes L\otimes (A^{\prime})^{m+1}\otimes\mathscr{I}(\varphi))\\
&\rightarrow R^{q}f_{\ast}(K_{H^{\prime}}\otimes L\otimes A^{\prime}|_{H^{\prime}}\otimes\mathscr{I}(\varphi|_{H'}))\rightarrow0.
\end{split}
\end{equation}
On the other hand, applying the inductive hypothesis to each connected component of $H^{\prime}$,
we have that for all $i\geqslant1$
\[
   H^{i}(Y,R^{q}f_{\ast}(K_{H^{\prime}}\otimes L\otimes A^{\prime}|_{H^{\prime}}\otimes\mathscr{I}(\varphi|_{H'})))=0.
\]
Furthermore, by the choice of $m$ we also have for all $i\geqslant1$
\begin{equation}\label{34}
    H^{i}(Y,R^{q}f_{\ast}(K_{X}\otimes L\otimes (A^{\prime})^{m+1}\otimes\mathscr{I}(\varphi)))=0.
\end{equation}
Now by taking the cohomology long exact sequence from the short exact sequence (\ref{e33}), we find for every $i>1$
\[
   H^{i}(Y,R^{q}f_{\ast}(K_{X}\otimes L\otimes A^{\prime}\otimes\mathscr{I}(\varphi)))=0.
\]
This proves the theorem for the cases where $i>1$.

To prove the case where $i=1$, we denote
\[
   B_{l}:=H^{1}(Y,R^{q}f_{\ast}(K_{X}\otimes L\otimes (A^{\prime})^{l}\otimes\mathscr{I}(\varphi))).
\]
By identity (\ref{34}) for $i=1$, we have $B_{m+1}=0$. Hence we consider the following commutative diagram:
\begin{equation*}
\xymatrix{
    B_{1}\ar[d] & \stackrel{\phi}{\longrightarrow} & H^{q+1}(X,K_{X}\otimes L \otimes A^{\prime}\otimes\mathscr{I}(\varphi))\ar[d]^{\psi} \\
    B_{m+1} & {\longrightarrow} &H^{q+1}(X,K_{X}\otimes L\otimes(A^{\prime})^{m+1}\otimes\mathscr{I}(\varphi))
    }
\end{equation*}
Here the horizontal maps are the canonical injective maps coming out of the Leray spectral sequence \cite{Har77}, and the vertical maps are induced by multiplication with sections defining $H^{\prime}$ and $H$ respectively. By Theorem \ref{t13} the map $\psi$ is injective, and hence the composition $\psi\circ\phi$ is also injective. So $B_1=0$ and we finish the proof of the theorem for the case where $i=1$.
\end{proof}

Using Theorem \ref{t14}, we can prove the global generation of the higher direct images. We first review the definition and a basic result of the Castelnuovo--Mumford regularity \cite{Mum66}.

\begin{definition}\label{d31}
Let $X$ be a projective manifold and $L$ an ample and globally generated line bundle on $X$. Given an integer $m$, a coherent sheaf $F$ on $X$ is $m$-regular with respect to $L$ if for all $i\geqslant1$
\[
H^{i}(X,F\otimes L^{m-i})=0.
\]
\end{definition}

\begin{theorem}(Mumford, \cite{Mum66})\label{t33}
Let $X$ be a projective manifold and $L$ an ample and globally generated line bundle on $X$. If $F$ is a coherent sheaf on $X$ that is $m$-regular with respect to $L$, then the sheaf $F\otimes L^{m}$ is globally generated.
\end{theorem}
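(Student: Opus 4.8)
The plan is to run Mumford's classical argument: cut $X$ with a general divisor in $|L|$ to drop dimension, and bootstrap through the long exact cohomology sequence. It is convenient to prove by induction the combined statement that $m$-regularity of $F$ with respect to $L$ implies both that $F$ is $(m+j)$-regular for every $j\geqslant0$ and that $F\otimes L^{m}$ is globally generated; the induction is on $n:=\dim(\operatorname{Supp}F)$. The case $n=0$ is immediate: $F\otimes L^{m}$ then has finite support, so it has no higher cohomology (hence is trivially $(m+j)$-regular for every $j\geqslant0$) and the evaluation $H^{0}(X,F\otimes L^{m})\to(F\otimes L^{m})\otimes k(x)$ is onto at every point $x$.

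For the inductive step, since $L$ is base-point-free on the complex projective manifold $X$, a general member $H\in|L|$ passes through no associated point of $F$ and contains no irreducible component of $\operatorname{Supp}F$. If $s\in H^{0}(X,L)$ cuts out $H$, then multiplication by $s$ is a non-zero-divisor on $F$, giving the exact sequence
\[
0\longrightarrow F\otimes L^{-1}\xrightarrow{\ \cdot s\ }F\longrightarrow F_{H}\longrightarrow 0,
\]
with $F_{H}:=F\otimes\mathcal{O}_{H}$ a coherent sheaf on $H$ satisfying $\dim(\operatorname{Supp}F_{H})=n-1$, and $L|_{H}$ again ample and globally generated. Twisting by $L^{m-i}$ and passing to cohomology squeezes $H^{i}(H,F_{H}\otimes(L|_{H})^{m-i})$ between $H^{i}(X,F\otimes L^{m-i})$ and $H^{i+1}(X,F\otimes L^{m-i-1})$, both of which vanish for $i\geqslant1$ by $m$-regularity of $F$; hence $F_{H}$ is $m$-regular on $H$. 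By the inductive hypothesis, $F_{H}$ is then $(m+j)$-regular for every $j\geqslant0$ and $F_{H}\otimes(L|_{H})^{m}$ is globally generated on $H$.

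Next I would push these conclusions back up to $X$. For $(m+1)$-regularity, twisting the sequence by $L^{m+1-i}$ yields
\[
H^{i}(X,F\otimes L^{m-i})\longrightarrow H^{i}(X,F\otimes L^{m+1-i})\longrightarrow H^{i}(H,F_{H}\otimes(L|_{H})^{m+1-i}),
\]
and for every $i\geqslant1$ the left term vanishes by $m$-regularity of $F$ while the right term vanishes by the $(m+1)$-regularity of $F_{H}$ supplied by the inductive hypothesis; hence $F$ is $(m+1)$-, and by iteration $(m+j)$-, regular for all $j\geqslant0$. For global generation, the vanishing $H^{1}(X,F\otimes L^{m-1})=0$ makes the restriction $H^{0}(X,F\otimes L^{m})\to H^{0}(H,F_{H}\otimes(L|_{H})^{m})$ surjective; composing with the global-generation surjection $H^{0}(H,F_{H}\otimes(L|_{H})^{m})\to(F_{H}\otimes(L|_{H})^{m})\otimes k(x)$ and using the identification $F_{H}\otimes k(x)=F\otimes k(x)$ at a point $x\in H$, Nakayama's lemma gives that $F\otimes L^{m}$ is generated by global sections at every $x\in H$; letting $H$ vary over a covering family of such divisors then yields global generation of $F\otimes L^{m}$ (the points at which no admissible $H$ passes being treated as indicated below), completing the induction.

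The step requiring the most care is the selection of the divisor $H\in|L|$. One needs $H$ to avoid every associated point of $F$ so that $\cdot s$ is injective and the fundamental exact sequence exists, yet for the global-generation conclusion at a prescribed point $x\in\operatorname{Supp}F$ one also wants $x\in H$; these demands conflict exactly when $x$ is an embedded (associated) point of $F$, and are further complicated when $L$ fails to separate the relevant points. This is handled either by a direct local computation at such points or, as in Mumford's original treatment, by reorganising the induction so that global generation is extracted from the regularity statement without forcing $x\in H$. A minor secondary point is that $L$ is only assumed ample and globally generated, not very ample, so the existence of a sufficiently general $H$ rests on the base-point-free Bertini / prime-avoidance principle over the infinite field $\mathbb{C}$ rather than on a genuine hyperplane section in projective space.
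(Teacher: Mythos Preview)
The paper does not supply a proof of this theorem; it is quoted with attribution to Mumford \cite{Mum66} and immediately used as a black box in the proof of Corollary~\ref{c11}. There is therefore no argument in the paper to compare yours against.

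On its own merits, your sketch is the classical hyperplane-section induction, and the regularity-propagation half (that $m$-regular implies $(m+j)$-regular for all $j\geqslant0$) is carried out correctly. The genuine gap is exactly the one you yourself flag in the final two paragraphs: deducing global generation at a point $x$ by restricting to a divisor $H\ni x$ breaks down when $x$ is an associated point of $F$, and you do not resolve this---you only gesture at ``a direct local computation'' or ``reorganising the induction,'' neither of which is carried out. Varying $H$ only establishes global generation on the complement of $\operatorname{Ass}(F)$, and there is no automatic passage from there to all of $X$. The standard clean fix (Mumford's lectures, or \cite{Laz04}, Theorem~1.8.5) is to add to the inductive package the surjectivity of the multiplication map
\[
H^{0}(X,F\otimes L^{m})\otimes H^{0}(X,L)\longrightarrow H^{0}(X,F\otimes L^{m+1}),
\]
proved via the same restriction sequence; global generation of $F\otimes L^{m}$ then follows by descending from $F\otimes L^{k}$ with $k\gg0$ (globally generated by Serre) using this surjectivity together with global generation of $L$, and no divisor through a prescribed point is ever needed. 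As written, your argument is incomplete precisely where you say it is.
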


After this, we can prove Corollary \ref{c11}.
\begin{proof}[Proof of Corollary \ref{c11}]
It follows from Theorem \ref{t14} that for every $i\geqslant1$
\[
  H^{i}(Y,R^{q}f_{\ast}(K_{X}\otimes L\otimes\mathscr{I}(f,\|L\|))\otimes A^{m-i}\otimes A^{\prime})=0.
\]
Hence the sheaf $R^{q}f_{\ast}(K_{X}\otimes L\otimes\mathscr{I}(f,\|L\|))\otimes A^{m}\otimes A^{\prime}$ is $0$-regular with respect to $A$. So it is globally generated by Theorem \ref{t33}.
\end{proof}

\section{The positivity of direct images}
\label{sec:direct}
In this section, we shall prove Theorem \ref{t12} following Veihweg's strategy in \cite{Vie82b,Vie83}. Note $f$ is furthermore supposed to be smooth here. Recall that $\varphi=\{\varphi_{U}\}$ is the collection of metrics on $L|_{f^{-1}(U)}$ such that $\mathscr{I}(\varphi)=\mathscr{I}(f,\|L\|)$. The following observation is needed:
\begin{lemma}\label{l41}
For any positive integer $m$, consider the $m$-fold fibre product 
\[
f_{m}:X_{m}=X\times_{Y}\cdot\cdot\cdot\times_{Y}X\rightarrow Y.
\] 
Using the same notation as Definition \ref{d25}, we have
\begin{enumerate}
\item $(f_{m})_{\ast}(K_{X_{m}/Y}\otimes(p^{m}_{1}\otimes p^{m-1}_{1}p^{m}_{2}\otimes\cdot\cdot\cdot\otimes p^{1}_{2}p^{2}_{2}...p^{m}_{2})^{\ast}(L))=f_{\ast}(K_{X/Y}\otimes L)^{\otimes m}$.
\item $\varphi_{m}=(p^{m}_{1}+p^{m-1}_{1}p^{m}_{2}+\cdot\cdot\cdot+p^{1}_{2}p^{2}_{2}...p^{m}_{2})^{\ast}\varphi$ is a collection of metrics on
\[
    L_{m}:=(p^{m}_{1}\otimes p^{m-1}_{1}p^{m}_{2}\otimes\cdot\cdot\cdot\otimes p^{1}_{2}p^{2}_{2}...p^{m}_{2})^{\ast}(L)
\]
such that $\mathscr{I}(\varphi_{m})=\mathscr{I}(f_{m},\|L_{m}\|)$. In particular, if $p$ is an integer that computes $\mathscr{I}(f,\|L\|)$, it also computes $\mathscr{I}(f_{m},\|L_{m}\|)$.
\item(Subadditivity) $\mathscr{I}(\varphi_{m})\subset(p^{m}_{1}\otimes p^{m-1}_{1}p^{m}_{2}\otimes\cdot\cdot\cdot\otimes p^{1}_{2}p^{2}_{2}...p^{m}_{2})^{\ast}\mathscr{I}(\varphi)$.
\end{enumerate}
\begin{proof}

(i) We simply prove it with $m=3$, and the general case follows in the same way. The calculation involves nothing but the repeated use of Lemmas \ref{l21} and \ref{l22}. We also need the following two facts in \cite{Har77}:

1. $(g\circ f)_{\ast}=g_{\ast}f_{\ast}$ for arbitrary morphisms $f$ and $g$;

2. $K_{X_{m}/Y}=(p^{m}_{1})^{\ast}K_{X/Y}\otimes (p^{m}_{2})^{\ast}K_{X_{m-1}/Y}$.

Then, we complete the proof by carefully chasing the diagram.
\[
\begin{split}
 &(f_{3})_{\ast}(K_{X_{3}/Y}\otimes (p^{3}_{1})^{\ast}L\otimes (p^{3}_{2})^{\ast}(p^{2}_{1})^{\ast}L\otimes (p^{3}_{2})^{\ast}(p^{2}_{2})^{\ast}L)\\
=&f_{\ast}(p^{3}_{1})_{\ast}((p^{3}_{1})^{\ast}K_{X/Y}\otimes (p^{3}_{2})^{\ast}K_{X_{2}/Y}\otimes (p^{3}_{1})^{\ast}L\otimes (p^{3}_{2})^{\ast}(p^{2}_{1})^{\ast}L\otimes (p^{3}_{2})^{\ast}(p^{2}_{2})^{\ast}L)\\
=&f_{\ast}(K_{X/Y}\otimes L\otimes (p^{3}_{1})_{\ast}((p^{3}_{2})^{\ast}K_{X_{2}/Y}\otimes (p^{3}_{2})^{\ast}(p^{2}_{1})^{\ast}L\otimes (p^{3}_{2})^{\ast}(p^{2}_{2})^{\ast}L))\\
=&f_{\ast}(K_{X/Y}\otimes L\otimes f^{\ast}(f_{2})_{\ast}(K_{X_{2}/Y}\otimes (p^{2}_{1})^{\ast}L\otimes (p^{2}_{2})^{\ast}L)\\
=&f_{\ast}(K_{X/Y}\otimes L\otimes f^{\ast}(f_{\ast}(p^{2}_{1})_{\ast})((p^{2}_{1})^{\ast}K_{X/Y}\otimes (p^{2}_{2})^{\ast}K_{X/Y}\otimes (p^{2}_{1})^{\ast}L\otimes (p^{2}_{2})^{\ast}L)\\
=&f_{\ast}(K_{X/Y}\otimes L\otimes f^{\ast}f_{\ast}(K_{X/Y}\otimes L\otimes (p^{2}_{1})_{\ast}((p^{2}_{2})^{\ast}K_{X/Y}\otimes (p^{2}_{2})^{\ast}L))\\
=&f_{\ast}(K_{X/Y}\otimes L)\otimes f_{\ast}(K_{X/Y}\otimes L\otimes f^{\ast}f_{\ast}(K_{X/Y}\otimes L))\\
=&f_{\ast}(K_{X/Y}\otimes L)^{\otimes3}.
\end{split}
\]

(ii) Similar computation with (i) also implies that $(f_{m})_{\ast}(L_{m})=f_{\ast}(L)^{\otimes m}$. Let $U$ be a local coordinate ball of $Y$. Then any section $u$ of $\Gamma(f^{-1}_{m}(U),L_{m})$ decomposes as
\[
u=u_{1}\otimes u_{2}\otimes \cdot\cdot\cdot\otimes u_{m}, 
\] 
where $u_{1},...,u_{m}$ are sections of $\Gamma(f^{-1}(U),L)$. By definition, we have
\[
\mathscr{I}(\varphi_{m})=\mathscr{I}(f_{m},\|L_{m}\|).
\]

(iii) It comes from the subadditivity of the multiplier ideal sheaves proved in \cite{DEL00}. In fact, since
\[
    \varphi_{m}=(p^{m}_{1}+p^{m-1}_{1}p^{m}_{2}+\cdot\cdot\cdot+p^{1}_{2}p^{2}_{2}...p^{m}_{2})^{\ast}\varphi,
\]
we have
\begin{equation}\label{e41}
    \mathscr{I}(\varphi_{m})\subset\mathscr{I}((p^{m}_{1})^{\ast}\varphi\cdot\mathscr{I}((p^{m-1}_{1}p^{m}_{2})^{\ast}\varphi)\cdot\cdots\cdot\mathscr{I}((p^{1}_{2}p^{2}_{2}...p^{m}_{2})^{\ast}\varphi)
\end{equation}
by the main result (Theorem 2.6) in \cite{DEL00}. One more application of Theorem 2.6 in \cite{DEL00} implies that
\begin{equation}\label{e42}
\begin{split}
\mathscr{I}((p^{m}_{1})^{\ast}\varphi)&=(p^{m}_{1})^{\ast}\mathscr{I}(\varphi),\\
\mathscr{I}((p^{m-1}_{1}p^{m}_{2})^{\ast}\varphi)&=(p^{m-1}_{1}p^{m}_{2})^{\ast}\mathscr{I}(\varphi),\\
\mathscr{I}((p^{m-2}_{1}p^{m-1}_{2}p^{m}_{2})^{\ast}\varphi)&=(p^{m-2}_{1}p^{m-1}_{2}p^{m}_{2})^{\ast}\mathscr{I}(\varphi),\\
    &\cdots\\
    \mathscr{I}((p^{1}_{2}p^{2}_{2}...p^{m}_{2})^{\ast}\varphi)&=(p^{1}_{2}p^{2}_{2}...p^{m}_{2})^{\ast}\mathscr{I}(\varphi).
\end{split}
\end{equation}

Indeed, let $y\in Y$. Take a local coordinate neighbourhood $U$ of $y$, we have
\[
    X|_{f^{-1}(U)}=U\times X_{y}
\]
and
\[
    X_{m}|_{f^{-1}_{m}(U)}=\underbrace{X_{y}\times\cdots\times X_{y}}_{m-1}\times(U\times X_{y}).
\]
Therefore, locally $X_{m}$ can be regarded as the product of two manifolds:
\[
X_{1}=\underbrace{X_{y}\times\cdots\times X_{y}}_{m-1} \textrm{ and } X_{2}=U\times X_{y}.
\]
Let $\phi_{1}=1$, which is a function on $X_{1}$. Let $\phi_{2}=\varphi_{U}$, which is a function on $X_{2}$. Apply the first statement of Theorem 2.6 in \cite{DEL00} to $(X_{1},\phi_{1})$ and $(X_{2},\phi_{2})$, we obtain
\[
\mathscr{I}((p^{m}_{1})^{\ast}\varphi)=(p^{m}_{1})^{\ast}\mathscr{I}(\varphi)
\]
at $y$ hence everywhere.  

The other formulas are the same. Combined with (\ref{e41}) and (\ref{e42}), the proof is complete.
\end{proof}
\end{lemma}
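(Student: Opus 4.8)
The plan is to dispatch the three items in order. Item (i) is a bookkeeping computation with the projection formula and base change; items (ii) and (iii) are local computations with the asymptotic multiplier ideal over coordinate balls of $Y$, using the product structure of $f_m$ over such balls (available since $f$, hence $f_m$, is smooth).

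For (i) I would induct on $m$, the case $m=1$ being a tautology. Writing $f_m=f\circ p^m_1$ and using $K_{X_m/Y}=(p^m_1)^\ast K_{X/Y}\otimes(p^m_2)^\ast K_{X_{m-1}/Y}$ together with $L_m=(p^m_1)^\ast L\otimes(p^m_2)^\ast L_{m-1}$, one gets $K_{X_m/Y}\otimes L_m=(p^m_1)^\ast(K_{X/Y}\otimes L)\otimes(p^m_2)^\ast(K_{X_{m-1}/Y}\otimes L_{m-1})$. Pushing forward by $(p^m_1)_\ast$, the projection formula (Lemma \ref{l21}) pulls $K_{X/Y}\otimes L$ out, leaving $(p^m_1)_\ast(p^m_2)^\ast(K_{X_{m-1}/Y}\otimes L_{m-1})$. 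Since the defining square of $X_m=X\times_Y X_{m-1}$ is Cartesian and $f$ is smooth, base change (Lemma \ref{l22}) identifies this with $f^\ast(f_{m-1})_\ast(K_{X_{m-1}/Y}\otimes L_{m-1})$. Applying $f_\ast$, one more projection formula plus the inductive hypothesis give $f_\ast(K_{X/Y}\otimes L)\otimes(f_{m-1})_\ast(K_{X_{m-1}/Y}\otimes L_{m-1})=f_\ast(K_{X/Y}\otimes L)^{\otimes m}$. The only delicate point is keeping straight which pullback acts on which tensor factor; this diagram chase is where slips are most likely, though nothing here is deep.

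For (ii) the same chase with the canonical factors dropped yields $(f_m)_\ast L_m=f_\ast(L)^{\otimes m}$. Over a coordinate ball $U\subset Y$, smoothness gives $f_m^{-1}(U)\cong U\times X_y\times\cdots\times X_y$, and under the identification $(f_m)_\ast L_m=(f_\ast L)^{\otimes m}$ every section of $L_m^k$ over $f_m^{-1}(U)$ arising from the relative base ideal of $|L_m^k|$ decomposes as a tensor product $u_1\otimes\cdots\otimes u_m$ of sections of $L^k$ over $f^{-1}(U)$ along the factors. Hence the local metric built from a generating set of such sections is precisely $\varphi_m$, so by the construction recalled in Section \ref{sec:asymptotic} one has $\mathscr{I}(\varphi_m)=\mathscr{I}(f_m,\|L_m\|)$; and the same decomposition shows that a level $p$ computing $\mathscr{I}(f,\|L\|)$ also computes $\mathscr{I}(f_m,\|L_m\|)$.

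For (iii), write $\pi_1=p^m_1$, $\pi_2=p^{m-1}_1 p^m_2$, \dots, $\pi_m=p^1_2 p^2_2\cdots p^m_2$, so that $\varphi_m=\sum_j\pi_j^\ast\varphi$. Subadditivity of multiplier ideals (Theorem 2.6 of \cite{DEL00}) gives $\mathscr{I}(\varphi_m)\subset\prod_j\mathscr{I}(\pi_j^\ast\varphi)$. Working over a ball $U$ on which $f_m^{-1}(U)$ splits as a product of $X_y$-factors with $\varphi$ pulled back from a single factor and constant in the rest, the restriction/product part of the same theorem identifies $\mathscr{I}(\pi_j^\ast\varphi)$ with $\pi_j^\ast\mathscr{I}(\varphi)$ at each $y$, hence globally; multiplying the inclusions yields the asserted containment $\mathscr{I}(\varphi_m)\subset(p^m_1\otimes\cdots\otimes p^1_2 p^2_2\cdots p^m_2)^\ast\mathscr{I}(\varphi)$. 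I expect the main obstacle to be not any single estimate but rather making the reduction to a product clean enough that the Demailly--Ein--Lazarsfeld restriction formula applies uniformly in $y$; I would phrase that step with care, after which the rest is routine once (i) is established.
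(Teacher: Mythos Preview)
Your proposal is correct and matches the paper's proof essentially line for line: (i) is the same projection-formula/base-change computation (the paper writes out $m=3$ explicitly while you phrase it inductively, but the ingredients and the identity $K_{X_m/Y}=(p^m_1)^\ast K_{X/Y}\otimes(p^m_2)^\ast K_{X_{m-1}/Y}$ are identical); (ii) is the same tensor-decomposition argument over a coordinate ball $U$; and (iii) is the same two-step appeal to Demailly--Ein--Lazarsfeld, first subadditivity $\mathscr{I}(\sum_j\pi_j^\ast\varphi)\subset\prod_j\mathscr{I}(\pi_j^\ast\varphi)$ and then the product/restriction identification $\mathscr{I}(\pi_j^\ast\varphi)=\pi_j^\ast\mathscr{I}(\varphi)$ via the local splitting of $X_m$ over $U$.
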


Before introducing the next lemma, we need to fix some notations. We denote the $m$-fold fibre product $\underbrace{X\times_{Y}\cdot\cdot\cdot\times_{Y}X}_{m}$ by
\[
f_{m}:X_{m}\rightarrow Y.
\]
Furthermore, let
\[
L_{m}:=(p^{m}_{1}\otimes p^{m-1}_{1}p^{m}_{2}\otimes\cdot\cdot\cdot\otimes p^{1}_{2}p^{2}_{2}...p^{m}_{2})^{\ast}(L),
\]
and $\varphi_{m}=\{\varphi_{U,m}\}$ be the collection of metrics induced by $\varphi=\{\varphi_{U}\}$. Now, given $m$ sections $u_{1},...,u_{m}$ of
\[
    \Gamma(U,f_{\ast}(K_{X/Y}\otimes L)),
\]
by Lemma \ref{l41} they together induce a section
\[
u^{\otimes m}:=(p^{m}_{1})^{\ast}u_{1}\otimes(p^{m-1}_{1}p^{m}_{2})^{\ast}u_{2}\otimes \cdot\cdot\cdot\otimes(p^{1}_{2}p^{2}_{2}...p^{m}_{2})^{\ast}u_{m}
\]
of
\[
\Gamma(U,(f_{m})_{\ast}(K_{X_{m}/Y}\otimes L_{m}))=\Gamma(f^{-1}_{m}(U),K_{X_{m}/Y}\otimes L_{m}).
\]
The next lemma shows that we even have
\[
u^{\otimes m}\in\Gamma(U,(f_{m})_{\ast}(K_{X_{m}/Y}\otimes L_{m}\otimes\mathscr{I}(\varphi_{m}))).
\]

\begin{lemma}\label{l42}
Keep the notations. Let $\varphi=\{\varphi_{U}\}$ and $\psi=\{\psi_{U}\}$ be the collection of metrics associated to $\mathscr{I}(f,\|L\|)$ and $\mathscr{I}(f,\|K_{X/Y}\otimes L\|)$ respectively. Assume that $\varphi$ is less singular than $\psi$, i.e.
\[
\varphi_{U}\preceq\psi_{U}\textrm{ for every }U.
\] 
Then
\[
    \int_{f^{-1}_{m}(U)}|u^{\otimes m}|^{2}e^{-\varphi_{U,m}}
\]
is finite. In other words,
\[
    u^{\otimes m}\in\Gamma(U,(f_{m})_{\ast}(K_{X_{m}/Y}\otimes L_{m}\otimes\mathscr{I}(\varphi_{m}))).
\]
\begin{proof}
Let $y\in Y$ be an arbitrary point. Take a local coordinate neighbourhood $U$ of $y$, such that
\[
X|_{f^{-1}(U)}=U\times X_{y}.
\] 
If we take the local coordinate on $f^{-1}(U)$ to be
\[
((y_{1},...,y_{n}),(x_{1},...,x_{l})),
\]
$\varphi_{U}$ can be written on $f^{-1}(U)$ as:
\[
   \varphi_{U}=\varphi_{U}((y_{1},...,y_{n}),(x_{1},...,x_{l})).
\]
Moreover, we have 
\[
f^{-1}_{m}(U)=U\times X^{1}_{y}\times\cdot\cdot\cdot\times X^{m}_{y}.
\] 
Here, we add the superscript $\{1,...,m\}$ to differentiate the fibres. Then, the corresponding local coordinate on $f^{-1}_{m}(U)$ will be
\[
((y_{1},...,y_{n}),(x^{1}_{1},...,x^{1}_{l}),...,(x^{m}_{1},...,x^{m}_{l})),
\]
and $\varphi_{U,m}$ becomes
\[
\begin{split}
\varphi_{U,m}&=(p^{m}_{1}+p^{m-1}_{1}p^{m}_{2}+p^{m-2}_{1}p^{m-1}_{2}p^{m}_{2}+...+ p^{1}_{2}p^{2}_{2}...p^{m}_{2})^{\ast}\varphi_{U}\\
           &=\sum_{j}\varphi_{U}((y_{1},...,y_{n}),(x^{j}_{1},...,x^{j}_{l})).
\end{split}
\]
We claim that, for any section $u^{\otimes m}$ of
\[
   \Gamma(U,(f_{m})_{\ast}(K_{X_{m}/Y}\otimes L_{m}))=\Gamma(f^{-1}_{m}(U),K_{X_{m}/Y}\otimes L_{m})
\]
defined above, the integral
\[
\begin{split}
\int_{U}\int_{X^{1}_{y}\times...\times X^{m}_{y}}|u^{\otimes m}|^{2}e^{-\varphi_{U,m}}
\end{split}
\]
is finite. In fact, we have
\[
\begin{split}
&\int_{U}\int_{X^{1}_{y}\times...\times X^{m}_{y}}|u^{\otimes m}|^{2}e^{-\varphi_{U,m}}\\
=&\int_{U}\int_{X^{1}_{y}\times...\times X^{m}_{y}}|u^{\otimes m}|^{2}e^{-\sum_{j}\varphi_{U}((y_{1},...,y_{n}),(x^{j}_{1},...,x^{j}_{l}))}\\
=&\int_{U}\prod^{m}_{j}\int_{X^{j}_{y}}|u_{j}|^{2}e^{-\varphi_{U}((y_{1},...,y_{n}),(x^{j}_{1},...,x^{j}_{l}))}\\
\leqslant&\prod^{m}_{j}(\int_{U}(\int_{X^{j}_{y}}|u_{j}|^{2}e^{-\varphi_{U}((y_{1},...,y_{n}),(x^{j}_{1},...,x^{j}_{l}))})^{m})^{1/m}.
\end{split}
\]
This last inequality is due to H\"{o}lder's inequality. Since
\[
    \int_{U}(\int_{X^{j}_{y}}|u_{j}|^{2}e^{-\varphi_{U}((y_{1},...,y_{n}),(x^{j}_{1},...,x^{j}_{l}))})^{m}=\int_{U}(\int_{X_{y}}|u_{j}|^{2}e^{-\varphi_{U}((y_{1},...,y_{n}),(x_{1},...,x_{l}))})^{m}
\]
for every $j$, this integral is finite by the definition of $\varphi_{U}$ (see Sect.\ref{sec:asymptotic}). 

In fact, since $u_{j}$ is a section of $\Gamma(f^{-1}(U),K_{X/Y}\otimes L)$, $u_{j}\in\mathfrak{a}_{f}\subseteq \mathscr{I}(f,\|K_{X/Y}\otimes L\|)$ by Proposition \ref{p22}, (i). Here $\mathfrak{a}_{f}$ is the base-ideal of $|K_{X/Y}\otimes L|$ relative to $f$. As a result, $|u_{j}|^{2}e^{-\psi_{U}((y_{1},...,y_{n}),(x_{1},...,x_{l}))}$ will be bounded. On the other hand, $\varphi_{U}$ is less singular than $\psi_{U}$ by assumption. So $|u_{j}|^{2}e^{-\varphi_{U}((y_{1},...,y_{n}),(x_{1},...,x_{l}))}$ is also bounded on $f^{-1}(U)$. We have finished the proof of this claim.

Then, we conclude that
\[
    \int_{f^{-1}_{m}(U)}|u^{\otimes m}|^{2}e^{-\varphi_{U,m}}
\]
is also finite. Indeed, let $Z:=\{y\in Y; \varphi_{U}|_{X_{y}}\equiv-\infty\}$, which is a set of measure zero. We have
\begin{equation}\label{e43}
\begin{split}
    \int_{f^{-1}_{m}(U)\setminus f^{-1}_{m}(Z)}|u^{\otimes m}|^{2}e^{-\varphi_{U,m}}&=\int_{(U\setminus Z)\times X^{1}_{y}\times\cdot\cdot\cdot\times X^{m}_{y}}|u^{\otimes m}|^{2}e^{-\varphi_{U,m}}\\
    &\leqslant\int_{U}\int_{X^{1}_{y}\times\cdot\cdot\cdot\times X^{m}_{y}}|u^{\otimes m}|^{2}e^{-\varphi_{U,m}}.
\end{split}
\end{equation}
Since the right hand side of the inequality is finite, the inequality is actually an equality. Therefore we conclude that
\[
    \int_{f^{-1}_{m}(U)}|u^{\otimes m}|^{2}e^{-\varphi_{U,m}}:=\int_{f^{-1}_{m}(U)\setminus f^{-1}_{m}(Z)}|u^{\otimes m}|^{2}e^{-\varphi_{U,m}}<+\infty.
\]
\end{proof}
\end{lemma}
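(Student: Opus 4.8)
The plan is to work locally over $Y$ and to exploit the smoothness of $f$. Fix $y\in Y$ and a coordinate ball $U\ni y$ over which the family trivialises, so that $f^{-1}(U)\cong U\times X_{y}$ and $f^{-1}_{m}(U)\cong U\times X^{1}_{y}\times\cdots\times X^{m}_{y}$, and the weight $\varphi_{U,m}$ is the additively separated sum $\sum_{j=1}^{m}\varphi_{U}((y_{1},\dots,y_{n}),(x^{j}_{1},\dots,x^{j}_{l}))$ (this is the pullback structure recorded in Lemma \ref{l41}(ii)). Since $K_{X_{m}/Y}$ restricted to a product fibre is the external tensor product of the $K_{X_{y}}$'s (iterate $K_{X_{m}/Y}=(p^{m}_{1})^{\ast}K_{X/Y}\otimes(p^{m}_{2})^{\ast}K_{X_{m-1}/Y}$), both $u^{\otimes m}$ and $e^{-\varphi_{U,m}}$ factor as products indexed by the $m$ fibre copies. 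Fubini then gives
\[
\int_{f^{-1}_{m}(U)}|u^{\otimes m}|^{2}e^{-\varphi_{U,m}}=\int_{U}\prod_{j=1}^{m}g_{j}(y),\qquad g_{j}(y):=\int_{X_{y}}|u_{j}|^{2}e^{-\varphi_{U}((y_{1},\dots,y_{n}),\cdot)},
\]
so it suffices to prove that $\prod_{j}g_{j}$ is integrable over $U$.

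Next I would apply the generalised H\"older inequality with all exponents equal to $m$, which reduces the task to showing $g_{j}\in L^{m}(U)$ for each $j$, i.e.\ $\int_{U}\bigl(\int_{X_{y}}|u_{j}|^{2}e^{-\varphi_{U}}\bigr)^{m}<\infty$. This is the crux, and it is exactly where the hypothesis $\varphi\preceq\psi$ enters. The section $u_{j}\in\Gamma(f^{-1}(U),K_{X/Y}\otimes L)$ lies in the relative base-ideal of $|K_{X/Y}\otimes L|$, hence in $\mathscr{I}(f,\|K_{X/Y}\otimes L\|)=\mathscr{I}(\psi)$ (cf. Proposition \ref{p22}(i)); therefore $|u_{j}|^{2}e^{-\psi_{U}}$ is locally bounded on $f^{-1}(U)$. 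Because $\psi_{U}\le\varphi_{U}+O(1)$ by the less-singular assumption, $|u_{j}|^{2}e^{-\varphi_{U}}$ is also locally bounded, and after shrinking $U$ it is bounded by a constant $M_{j}$ on all of $f^{-1}(U)$. Consequently $g_{j}(y)\le M_{j}\operatorname{Vol}(X_{y})$ is uniformly bounded on $U$, so $g_{j}\in L^{\infty}(U)\subset L^{m}(U)$ (recall $U$ is relatively compact), and the H\"older estimate closes.

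Finally one must dispose of the null set $Z:=\{y\in Y:\varphi_{U}|_{X_{y}}\equiv-\infty\}$: since $f^{-1}_{m}(Z)$ is negligible for the relevant volume form, the integral over $f^{-1}_{m}(U)\setminus f^{-1}_{m}(Z)$---on which the trivialisation computation above is literally valid---agrees with the integral over all of $f^{-1}_{m}(U)$, which is thus finite; this is precisely the assertion $u^{\otimes m}\in\Gamma(U,(f_{m})_{\ast}(K_{X_{m}/Y}\otimes L_{m}\otimes\mathscr{I}(\varphi_{m})))$. The main obstacle is the bound of the second paragraph, and its real content is the indispensability of $\varphi\preceq\psi$: the weight $\varphi$ governing the ideal sheaf we push forward lives on $L$ and may be far more singular than the natural weight $\psi$ on $K_{X/Y}\otimes L$ against which the fibrewise sections $u_{j}$ are a priori bounded, so only the assumption that $\varphi$ is the less singular of the two lets us transfer that boundedness. (Once this pointwise bound is available the H\"older step is really a convenience---$|u^{\otimes m}|^{2}e^{-\varphi_{U,m}}\le\prod_{j}M_{j}$ is then bounded on the relatively compact $f^{-1}_{m}(U)$ directly---but I would keep it, as it is robust to only knowing $L^{m}$-control of the fibre integrals.)
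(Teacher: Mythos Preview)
Your proposal is correct and follows essentially the same route as the paper's proof: the same local product trivialisation, the same Fubini factorisation of the integral, the same H\"older step, the same use of Proposition~\ref{p22}(i) together with $\varphi\preceq\psi$ to obtain the pointwise bound on $|u_{j}|^{2}e^{-\varphi_{U}}$, and the same disposal of the null set $Z$. Your parenthetical observation that the H\"older inequality becomes redundant once the pointwise bound is in hand is accurate (the paper does not remark on this), but otherwise the arguments coincide step for step.
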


Now we turn to Theorem \ref{t12}.

\begin{proof}[Proof of Theorem \ref{t12}]
Keep the notations. Consider the $m$-fold fibre product
\[
f_{m}:X_{m}=X\times_{Y}\cdot\cdot\cdot\times_{Y}X\rightarrow Y.
\]
If we denote
\[
    L_{m}:=(p^{m}_{1}\otimes p^{m-1}_{1}p^{m}_{2}\otimes\cdot\cdot\cdot\otimes p^{1}_{2}p^{2}_{2}...p^{m}_{2})^{\ast}(L),
\]
by Lemma \ref{l41}, we have
\[
   (f_{\ast}(K_{X/Y}\otimes L))^{\otimes m}=(f_{m})_{\ast}(K_{X_{m}/Y}\otimes L_{m}).
\]
Moreover, Lemma \ref{l41} also implies that
\[
    \mathscr{I}(\varphi_{m})\subset(p^{m}_{1}\otimes p^{m-1}_{1}p^{m}_{2}\otimes p^{m-2}_{1}p^{m-1}_{2}p^{m}_{2}\otimes\cdot\cdot\cdot\otimes p^{1}_{2}p^{2}_{2}...p^{m}_{2})^{\ast}\mathscr{I}(\varphi),
\]
so we have
\[
\begin{split}
    &(f_{m})_{\ast}(K_{X_{m}/Y}\otimes L_{m}\otimes\mathscr{I}(\varphi_{m}))\\
    \subset&(f_{m})_{\ast}(K_{X_{m}/Y}\otimes L_{m}\otimes(p^{m}_{1}\otimes p^{m-1}_{1}p^{m}_{2}\otimes\cdot\cdot\cdot\otimes p^{1}_{2}p^{2}_{2}...p^{m}_{2})^{\ast}\mathscr{I}(\varphi))\\
    =&f_{\ast}(K_{X/Y}\otimes L\otimes\mathscr{I}(\varphi))^{\otimes m}.
\end{split}
\]
On the other hand, Lemma \ref{l42} says that the opposite direction of this inclusion holds, too. Thus, we actually have
\[
    \mathcal{E}^{\otimes m}:=f_{\ast}(K_{X/Y}\otimes L\otimes\mathscr{I}(\varphi))^{\otimes m}=(f_{m})_{\ast}(K_{X_{m}/Y}\otimes L_{m}\otimes\mathscr{I}(\varphi_{m}))
\]
for all positive integer $m$.

Then, we fix a very ample line bundle $A$ on $Y$. Let $A^{\prime}=\mathcal{O}_{Y}$ and let $H=K_{Y}\otimes A^{\otimes(\dim Y+1)}$. By way of motivation, imagine for the moment that one had a singular metric $\chi_{m}$ on $-K_{X_{m}/Y}\otimes L^{p}_{m}$ such that
\[
i\Theta_{-K_{X_{m}/Y}\otimes L^{p}_{m},\chi_{m}}\geqslant0\textrm{ and }\mathscr{I}(\tau_{m})=\mathcal{O}_{X_{m}}.
\]
Then applying Corollary \ref{c11} to the fibration $f_{m}$ as well as the direct image 
\[
(f_{m})_{\ast}(K_{X_{m}/Y}\otimes L_{m}\otimes\mathscr{I}(\varphi_{m})),
\] 
we deduce that the sheaf $\mathcal{E}^{\otimes m}\otimes H$ is generated by its global sections. In particular, $\mathcal{E}$ is weakly positive in the sense of Viehweg. While in reality the existence of $\chi_{m}$ may be too much to hope for, a simple observation is that 
\[
\tau_{m}:=(p^{m}_{1}+p^{m-1}_{1}p^{m}_{2}+p^{m-2}_{1}p^{m-1}_{2}p^{m}_{2}+...+ p^{1}_{2}p^{2}_{2}...p^{m}_{2})^{\ast}\tau
\]
is just as good. In fact, recall the proof of Theorem \ref{t13} (and the notations there), we only use the fact that $\mathscr{I}(\tau|_{F})=\mathcal{O}_{F}$. Returning to the situation of this theorem, the general fibre of $f_{m}:X_{m}\rightarrow Y$ is $F_{m}:=\underbrace{F\times\cdots\times F}_{m}$. Let $q_{i}:F_{m}\rightarrow F$ be the $i$-th projection. Obviously
\[
\mathscr{I}(\tau_{m}|_{F_{m}})=\mathscr{I}(\sum_{i} q^{\ast}_{i}(\tau|_{F}))=\mathcal{O}_{F_{m}},
\]
hence Corollary \ref{c11} still applies here.

Now the torsion-free coherent sheaf $S^{m}\mathcal{E}\otimes H$, being a quotient of $\mathcal{E}^{\otimes m}\otimes H$, is also globally generated. Consider $\pi:\mathbb{P}(\mathcal{E}^{\ast})\rightarrow Y$. Here $\mathbb{P}(\mathcal{E}^{\ast})$ refers to the projective space bundle \cite{Har77} associated to a coherent sheaf. Note that we have the surjective morphism
\[
    \pi^{\ast}\pi_{\ast}\mathcal{O}_{\mathcal{E}}(m)\cong\pi^{\ast}(S^{m}(\mathcal{E}))\rightarrow\mathcal{O}_{\mathcal{E}}(m),
\]
and we thus deduce that $\mathcal{O}_{\mathcal{E}}(m)\otimes\pi^{\ast}H$ is globally generated (hence nef) for every $m\geqslant1$. This implies that $\mathcal{O}_{\mathcal{E}}(1)$ is nef, that is, $\mathcal{E}$ is nef.
\end{proof}

In the end, we discuss several special cases of Theorem \ref{t12}. Firstly, we prove Theorem \ref{t11}.

\begin{proof}[Proof of Theorem \ref{t11}]
$m=0$ is trivial. $m=1$ is furnished by Griffiths \cite{Gri84} and Fujita--Kawamata \cite{Kaw82}. So we assume $m>1$ without loss of generality.

Note that if $\varphi=\{\varphi_{U}\}$ is the collection of metrics associated to 
\[
\mathscr{I}(f,\|K_{X/Y}\|),
\] 
then $m\varphi=\{m\varphi_{U}\}$ is the collection of metrics associated to $\mathscr{I}(f,\|K^{m}_{X/Y}\|)$. Obviously we have 
\[
(m-1)\varphi\preceq m\varphi.
\]

On the other hand, let $\psi$ be the metric on $K_{X/Y}$ such that $\mathscr{I}(\psi)=\mathscr{I}(\|K_{X/Y}\|)$ (see Sect.\ref{sec:asymptotic}), then $i\Theta_{K^{p(m-1)-1}_{X/Y},(p(m-1)-1)\psi}\geqslant0$ and 
\[
\mathscr{I}((p(m-1)-1)\psi)=\mathscr{I}((p(m-1)-1)\|K_{X/Y}\|)=\mathcal{O}_{X} 
\] 
for $m\leqslant k+1$.

Apply Theorem \ref{t12} with $L=K^{m-1}_{X/Y}$, and observe that $\mathscr{I}(f,(m-1)\|K_{X/Y}\|)$ is also trivial by Proposition \ref{p22}, (iv), we then obtain that $f_{\ast}(K^{m}_{X/Y})$ is nef for $m\leqslant k+1$.
\end{proof}

\begin{corollary}\label{c41}
Let $f:X\rightarrow Y$ be a smooth fibration between projective manifolds $X$ and $Y$. Let $L$ be a holomorphic line bundle on $X$ with $\kappa(K_{X/Y}\otimes L,f)\geqslant0$. Fix $p\gg0$ and divisible enough that computes $\mathscr{I}(f,\|L\|)$. Assume that $-K_{X/Y}$ is semi-ample, and there exists a (singular) metric $\tau$ on $L^{p}$ such that $i\Theta_{L^{p},\tau}\geqslant0$ and $\mathscr{I}(\tau)=\mathcal{O}_{X}$. Then 
\[
f_{\ast}(K_{X/Y}\otimes L\otimes\mathscr{I}(f,\|L\|))
\]
is nef.
\begin{proof}
Fix $q\gg0$ and divisible enough that computes both of $\mathscr{I}(f,\|K_{X/Y}\otimes L\|)$ and $\mathscr{I}(f,\|L\|)$. Furthermore, $|-K^{q}_{X/Y}|$ is base-point free. 

Let $\{\alpha_{i}\}$ be a basis of $H^{0}(X,(K_{X/Y}\otimes L)^{q})$ and let $\{\beta_{j}\}$ be a basis of $H^{0}(X,-K^{q}_{X/Y})$. Then all of $\alpha_{i}\otimes\beta_{j}$ form a linear subspace $\mathfrak{d}$ of $|L^{q}|$. In particular, the base-ideal $\mathfrak{a}$ of $\mathfrak{d}$ is contained in the base-ideal $\mathfrak{b}$ of $|L^{q}|$. Note that $\mathfrak{a}$ is also the base-ideal of $|(K_{X/Y}\otimes L)^{q}|$ since $|-K^{q}_{X/Y}|$ is base-point free. 

Let $\{\gamma_{k}\}$ be the generators of $\mathfrak{b}$. Then
\[
\log(\sum_{k}|\gamma_{k}|^{2})\preceq\log(\sum_{i,j}|\alpha_{i}\otimes\beta_{j}|^{2})
\]
due to the inclusion of the corresponding ideals. Therefore the second requirement of Theorem \ref{t12} is verified by definition. The first requirement is by assumption. Now the conclusion follows by applying Theorem \ref{t12}.
\end{proof}
\end{corollary}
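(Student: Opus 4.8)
The plan is to verify, one at a time, the hypotheses of Theorem \ref{t12} for the line bundle $L$ and then invoke that theorem verbatim. First I would dispose of the two Iitaka-dimension conditions. The inequality $\kappa(K_{X/Y}\otimes L,f)\geqslant 0$ is assumed, so only $\kappa(L,f)\geqslant 0$ needs an argument: writing $L=(K_{X/Y}\otimes L)\otimes(-K_{X/Y})$, a general fibre $F$ carries a non-zero section of some power of $K_{X/Y}\otimes L$ (by the assumption on its relative Iitaka dimension), while $-K_{X/Y}|_{F}$ is globally generated, hence effective, because $-K_{X/Y}$ is semi-ample; the tensor product of the two sections shows $f_{\ast}L^{k}\neq 0$ for a suitable $k$, so $\kappa(L,f)\geqslant 0$. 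In particular $\mathscr{I}(f,\|L\|)$ and its associated local weights $\varphi=\{\varphi_{U}\}$ make sense.

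Next I would verify condition (1). Semi-ampleness of $-K_{X/Y}$ gives, via the morphism attached to a base-point-free power and pullback of a Fubini--Study metric, a \emph{smooth} metric $\sigma$ on $-K_{X/Y}$ with $i\Theta_{-K_{X/Y},\sigma}\geqslant 0$. Twisting the given $\tau$ on $L^{p}$ by $\sigma$ produces a metric on $-K_{X/Y}\otimes L^{p}$ whose curvature is still $\geqslant 0$ and, $\sigma$ being smooth, whose multiplier ideal equals $\mathscr{I}(\tau)=\mathcal{O}_{X}$; this is precisely condition (1). (Note that the given $\tau$ alone only lives on $L^{p}$, so the semi-ampleness of $-K_{X/Y}$ is genuinely used here.)

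The real work is condition (2), i.e. $\varphi\preceq\psi$ for the local weights $\psi=\{\psi_{U}\}$ attached to $\mathscr{I}(f,\|K_{X/Y}\otimes L\|)$. Here I would fix one integer $q\gg 0$ that is divisible enough to compute both $\mathscr{I}(f,\|L\|)$ and $\mathscr{I}(f,\|K_{X/Y}\otimes L\|)$ and so that $|-K^{q}_{X/Y}|$ is base-point-free. Over a small ball $U\subset Y$, $q\psi_{U}$ is the logarithm of a sum of squared moduli of generators of the relative base-ideal of $|(K_{X/Y}\otimes L)^{q}|$, and $q\varphi_{U}$ the analogue for $|L^{q}|$. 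Multiplying such a generating set for the first by a basis of $\Gamma(f^{-1}(U),-K^{q}_{X/Y})$ yields sections of $L^{q}$ over $f^{-1}(U)$, so the ideal they generate is contained in the relative base-ideal of $|L^{q}|$; on the other hand, since $|-K^{q}_{X/Y}|$ has no base points, that ideal coincides with the relative base-ideal of $|(K_{X/Y}\otimes L)^{q}|$. Expressing, on compact subsets, one family of generators as holomorphic combinations of the other turns this inclusion of ideals into $q\psi_{U}\leqslant q\varphi_{U}+O(1)$ for every $U$, which is exactly $\varphi\preceq\psi$. With the two Iitaka inequalities and conditions (1) and (2) all in place, Theorem \ref{t12} gives that $f_{\ast}(K_{X/Y}\otimes L\otimes\mathscr{I}(f,\|L\|))$ is nef. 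I expect condition (2) to be the delicate point: one has to be careful that the weights $\varphi,\psi$ are built from the \emph{relative} (not the absolute) base-ideals, that $\varphi\preceq\psi$ is read off locally over $Y$ as an inequality between weight functions of two a priori distinct line bundles, and that base-point-freeness of $|-K^{q}_{X/Y}|$ is precisely what makes multiplication by sections of $-K^{q}_{X/Y}$ lose nothing.
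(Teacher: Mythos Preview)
Your proposal is correct and follows the same strategy as the paper: verify the two conditions of Theorem~\ref{t12} by exploiting the semi-ampleness of $-K_{X/Y}$, then invoke that theorem. The core idea for condition~(2)---tensor generators for the base-ideal of $|(K_{X/Y}\otimes L)^{q}|$ with a basis of $\Gamma(-K^{q}_{X/Y})$ to land inside the base-ideal of $|L^{q}|$, and use base-point-freeness of $|-K^{q}_{X/Y}|$ to ensure nothing is lost---is exactly the paper's argument.

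There is one noteworthy technical difference. The paper carries out the comparison using \emph{global} sections $\alpha_{i}\in H^{0}(X,(K_{X/Y}\otimes L)^{q})$ and the \emph{global} base-ideal $\mathfrak{b}$ of $|L^{q}|$, whereas you work with \emph{relative} base-ideals over each $f^{-1}(U)$. Since the weights $\varphi=\{\varphi_{U}\}$ and $\psi=\{\psi_{U}\}$ appearing in condition~(2) of Theorem~\ref{t12} are by definition built from the relative base-ideals (Section~\ref{sec:asymptotic}), your formulation is more directly aligned with what needs to be checked; the paper's version requires an extra (tacit) step to pass from global to relative. You are also more explicit than the paper in two places it leaves to the reader: you spell out why $\kappa(L,f)\geqslant 0$ follows from the hypotheses, and you give the actual construction of the metric on $-K_{X/Y}\otimes L^{p}$ for condition~(1), which the paper records only as ``by assumption.''
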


\begin{corollary}\label{c42}
Let $f:X\rightarrow Y$ be a smooth fibration between projective manifolds $X$ and $Y$ with $\kappa(-K_{X/Y},f)\geqslant0$. Assume that $\mathscr{I}(k\|-K_{X/Y}\|)=\mathcal{O}_{X}$ for every $k\geqslant0$. Then
\[
f_{\ast}(-K^{m}_{X/Y})
\] 
is nef for any integer $m\geqslant -1$. 
\begin{proof}
The case where $m=-1$ is due to Griffiths \cite{Gri84} and Fujita--Kawamata \cite{Kaw82}, which is even valid without any extra assumption.

Now $m\geqslant0$. Since $\mathscr{I}(m\|-K_{X/Y}\|)=\mathcal{O}_{X}$, we have $\mathscr{I}(f,m\|-K_{X/Y}\|)=\mathcal{O}_{X}$ by Proposition \ref{p22}, (iv). Recall that there exists a metric $\psi$ on $-K_{X/Y}$ such that $i\Theta_{-K_{X/Y},\psi}\geqslant0$ and 
\[
\mathscr{I}(m\psi)=\mathscr{I}(m\|-K_{X/Y}\|).
\] 
So the requirement (1) of Theorem \ref{t12} is verified.

On the other hand, let $\varphi=\{\varphi_{U}\}$ be the collection of metrics associated to 
\[
\mathscr{I}(f,\|-K_{X/Y}\|).
\]  
Then
\[
\mathscr{I}(k\varphi)=\mathcal{O}_{X}
\]
for any $k$. It implies that $\varphi$ is actually smooth concerning the fact that $\varphi$ has algebraic singularities \cite{Dem12}. The second requirement now is obviously satisfied. This proves the conclusion for the cases when $k\geqslant0$ by Theorem \ref{t12}.

\end{proof}
\end{corollary}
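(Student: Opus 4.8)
The plan is to deduce the statement from Theorem~\ref{t12} applied to $L=-K^{m+1}_{X/Y}$, for which $K_{X/Y}\otimes L\cong -K^{m}_{X/Y}$, so that the nef sheaf $\mathcal{E}=f_{\ast}(K_{X/Y}\otimes L\otimes\mathscr{I}(f,\|L\|))$ provided by that theorem becomes $f_{\ast}(-K^{m}_{X/Y})$ once we check $\mathscr{I}(f,\|L\|)=\mathcal{O}_{X}$. First, though, I would treat the boundary value $m=-1$ separately: there $-K^{-1}_{X/Y}=K_{X/Y}$, and the nefness of $f_{\ast}(K_{X/Y})$ is the theorem of Griffiths and Fujita--Kawamata, valid with no extra hypothesis. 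So I assume $m\geqslant0$ henceforth.

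For $m\geqslant0$ the numerical hypotheses of Theorem~\ref{t12} are immediate, because a tensor power of a line bundle with non-negative relative Iitaka dimension again has non-negative relative Iitaka dimension; thus $\kappa(L,f)\geqslant0$ and $\kappa(K_{X/Y}\otimes L,f)=\kappa(-K^{m}_{X/Y},f)\geqslant0$. Condition~(1) of Theorem~\ref{t12} is also easy: the metric $\psi$ on $-K_{X/Y}$ associated with $\mathscr{I}(\|-K_{X/Y}\|)$ has semipositive curvature current and algebraic singularities, and rescaling it gives a semipositive metric $\tau$ on $-K_{X/Y}\otimes L^{p}=-K^{1+p(m+1)}_{X/Y}$; choosing $p$ divisible enough that $\mathscr{I}(\tau)$ agrees with one of the asymptotic multiplier ideals of $-K_{X/Y}$, the hypothesis $\mathscr{I}(k\|-K_{X/Y}\|)=\mathcal{O}_{X}$ for all $k$ forces $\mathscr{I}(\tau)=\mathcal{O}_{X}$.

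Condition~(2) is the real point. Let $\varphi=\{\varphi_{U}\}$ and $\psi'=\{\psi'_{U}\}$ be the collections of local metrics attached to $\mathscr{I}(f,\|L\|)$ and $\mathscr{I}(f,\|K_{X/Y}\otimes L\|)$; locally each is a logarithm of a sum of squared sections, hence has algebraic singularities. Using the standing hypothesis together with Proposition~\ref{p22}(iv), all the multiplier ideals $\mathscr{I}(k\varphi)$ and $\mathscr{I}(k\psi')$, $k\geqslant0$, are trivial. A metric with algebraic singularities whose multiplier ideal stays equal to $\mathcal{O}_{X}$ after its weight is multiplied by arbitrarily large integers can have no singularities, so $\varphi$ and $\psi'$ are in fact smooth; therefore $\varphi\preceq\psi'$ automatically. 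I expect this implication --- and the fact that it really uses triviality for \emph{every} $k$, not just one $k$ as in Theorem~\ref{t11} --- to be the only delicate step.

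Finally, by the remark after Proposition~\ref{p22} and part~(iv) of that proposition, $\mathscr{I}(f,\|L\|)=\mathscr{I}(f,(m+1)\|-K_{X/Y}\|)=\mathcal{O}_{X}$, so $\mathcal{E}=f_{\ast}(-K^{m}_{X/Y}\otimes\mathscr{I}(f,\|L\|))=f_{\ast}(-K^{m}_{X/Y})$; Theorem~\ref{t12} then gives its nefness, which finishes the case $m\geqslant0$ and hence the corollary.
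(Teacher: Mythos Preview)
Your argument is correct and follows essentially the same route as the paper: apply Theorem~\ref{t12} with $L=-K_{X/Y}^{\,m+1}$, handle $m=-1$ via Griffiths/Fujita--Kawamata, verify condition~(1) using the metric attached to $\mathscr{I}(\|-K_{X/Y}\|)$, and verify condition~(2) by observing that the relevant local metrics have algebraic singularities yet trivial multiplier ideals for every multiple, hence are smooth. The only cosmetic difference is that the paper phrases the smoothness argument in terms of the single base metric $\varphi$ attached to $\mathscr{I}(f,\|-K_{X/Y}\|)$ (so that the metrics on $L$ and $K_{X/Y}\otimes L$ are $(m+1)\varphi$ and $m\varphi$), whereas you treat the two metrics separately; this makes no substantive difference.
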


\begin{acknowledgements}
The author wants to thank Prof. Jixiang Fu for his suggestion and encouragement.
\end{acknowledgements}

\end{document}